\newcommand{\NN}{\mathbb{N}}
\newcommand{\RR}{\mathbb{R}}
\newcommand{\ZZ}{\mathbb{Z}}
\DeclareMathOperator{\EE}{\mathbb{E}}
\DeclareMathOperator{\PP}{\mathbb{P}}
\DeclareMathOperator{\Var}{Var}
\DeclareMathOperator{\Cov}{Cov}
\newcommand{\bx}{\mathbf{x}}
\newcommand{\bX}{\mathbf{X}}
\newcommand{\by}{\mathbf{y}}
\newcommand{\calB}{\mathcal{B}}
\newtheorem{theorem}{Theorem}[section]
\newtheorem{lemma}[theorem]{Lemma}
\newtheorem{proposition}[theorem]{Proposition}
\newtheorem{corollary}[theorem]{Corollary}
\theoremstyle{definition}
\newtheorem{definition}[theorem]{Definition}
\newtheorem{remark}[theorem]{Remark}
\newtheorem{example}[theorem]{Example}
\newtheorem*{condition}{Condition}
\title{Ordinal Patterns in Clusters of Subsequent Extremes of Regularly Varying Time Series}
\author{Marco Oesting\thanks{University of Siegen, Department Mathematik, Walter-Flex-Str.~3, 57068 Siegen, Germany, Email: oesting@mathematik.uni-siegen.de} \; 
	and Alexander Schnurr\thanks{University of Siegen, Department Mathematik, Walter-Flex-Str.~3, 57068 Siegen, Germany, Email:  schnurr@mathematik.uni-siegen.de}}
\begin{document}
	
\maketitle

\begin{abstract}
In this paper, we investigate temporal clusters of extremes defined as 
subsequent exceedances of high thresholds in a stationary time series. Two 
meaningful features of these clusters are the probability distribution of the 
cluster size and the ordinal patterns giving the relative positions of the data points within a cluster. Since these patterns take only the ordinal structure of consecutive data points into account, the method is robust under monotone transformations and measurement errors. We verify the existence of the corresponding limit distributions in the framework of regularly varying time series, develop non-parametric estimators and show their asymptotic normality under appropriate mixing conditions. The performance of the estimators is demonstrated in a simulated example and a real data application to discharge data of the river Rhine.
\end{abstract}

\section{Introduction}

In time series data sets, extremes often do not occur at scattered instants of time, but tend to form clusters. Assigning a cluster of extremes to a single extreme event, such as a flood in the context of a hydrological time series or a stock market crash in the context of a financial data, the distribution of these clusters is crucial for risk assessment.

In order to analyze the occurrence times of extremes defined as exceedances over some high threshold $u$, some profound theory has been built up since the 1970s. Within this framework, data $X_1,\ldots,X_n$ from a stationary time series $(X_t)_{t \in \ZZ}$ are typically divided into different blocks. Then, repeated extremes are said to form a cluster if they occur within the same temporal block. Due to the convergence of the process of exceedances to a Poisson point process under appropriate conditions as $u \to \infty$, the distribution of these clusters converges weakly provided that the block size increases at the right speed. The limit distribution is nicely linked to the well-known concept of the extremal index of the time series which can be interpreted as the reciprocal of the mean limiting cluster size \citep[cf.][for an overview]{llr-83,ekm-97,chavez-davison-12}. 
Besides the extremal index, several other cluster characteristics are of interest and can be estimated, such as the distribution of the cluster size \citep{robert-09} or more general cluster functionals \citep{drees-rootzen-10}. Convergence of clusters in appropriate sequence 
spaces preserving the order of observations can be shown within the framework of regular variation \citep{BPS18}.

Even though positive theoretical results exist, estimation of characteristics of clusters as defined above is difficult for finite samples. Here, besides the threshold $u$, also the block size or, equivalently, some cluster identification parameter giving the minimum distance between two separate clusters, needs to be chosen. Instead, in
this paper, we will use a different definition of a cluster of extremes by restricting our attention to subsequent threshold exceedances, i.e.\ a realization of the $l$-dimensional vector $(X_i)_{i=t}^{t+l-1}$ will be called a $u$-exceedance cluster of size $l \in \NN$ if and only if 
\begin{equation} \label{eq:def-cluster}
X_{t-1} \leq u, X_t > u, \ldots,
X_{t+l-1}>u \text{ and } X_{t+l} \leq u.
\end{equation}
As any non-exceedance will separate two clusters, this definition is much stricter than the classical definition described above. An advantage of the definition of $u$-exceedance clusters is that it depends on one parameter, namely the threshold $u$, only. Such a cluster definition has 
already been employed in a series of papers by \citet{markovich-14, markovich-16, markovich-17} who analyzes the limit distribution of two cluster characteristics.
First, she considers the number of inter-cluster times $T_1(u)$, i.e.\ the number of observations between two consecutive clusters, which is a random variable with the same distribution as 
$$ \min\{j \geq 1: \, X_{j+1} > u\} \quad \text{conditional on } X_1 > u.$$ 
Note that this number of inter-cluster time also plays an important role in the estimation of the extremal index \citep{ferro-segers-03}.
Secondly, she studies the random variable $T_2(u)$ with the same distribution as
$$ \min\{j \geq 1: \, X_{j+1} \leq u\} \quad \text{conditional on } X_1 \leq u,$$  
i.e.\ $T_2(u)-1$ is the length of a $u$-exceedance cluster starting at some fixed
time. Since we have $\lim_{u \to \infty} \PP(X_2 \leq u) = 1$, the distribution of $T_2(u)$
is typically expected to converge weakly to a degenerate distribution, i.e.
$\lim_{u \to \infty} \PP(T_2(u) = 1) = 1$. In \citet{markovich-14, markovich-16},
under appropriate mixing conditions, the rate of convergence is determined as a function of the extremal index. More precisely, for all $\varepsilon > 0$,
there exist a threshold $u_0 = u_0(\varepsilon)$ a number $j_0 = j_0(\varepsilon)$ such that, for all $u > u_0$ and $j > j_0$,
$$ \left| \frac{1 - \PP(X_0>u)^\theta}{\theta^2 (1-\PP(X_0>u))} \cdot \frac{\PP(T_2(u) =j)}{[1-\PP(X_0>u)^\theta] \{\PP(X_0>u)^\theta\}^{j-1}} 
- 1 \right| < \varepsilon, $$
i.e.\ for a sufficiently large threshold $u$, the tail of the distribution of $T_2(u)$ becomes proportional to a geometric distribution with parameter
$1 - \PP(X_0 > u)^\theta$. Furthermore, \citet{markovich-14, markovich-16}
provides results for the duration of clusters if the time between subsequent observations is random.

In our paper, we will focus on the case of an equally spaced time series,
i.e.\ the case when the terms of the duration and the size of cluster coincide.
Here, instead of considering the probability that there is a cluster of a specific size at a certain time, we analyze the size of a randomly chosen $u$-exceedance cluster or, equivalently, we examine the size of a cluster conditional on being a cluster of positive length. Thus, we first address the question:
\begin{center}
	How long does an extreme event in a time series last provided that it occurs?
\end{center}

Secondly, we analyze so-called ordinal patterns which we find in the above 
mentioned clusters of extremes. Ordinal patterns keep the ordinal information of the data only and, thus, describe their `up-and-down behaviour'. Here, the relative position of the data points $x_0,\ldots,x_{l-1}$ is encoded by a permutation $\pi$ on $\{0,\ldots,l-1\}$ such that 
$$ x_{\pi(0)} \geq x_{\pi(1)} \geq \ldots \geq x_{\pi(l-1)}. $$
Note that this permutation is unique if the data points $x_0,\ldots,x_{l-1}$
are pairwise distinct. The following precise definition also accounts for the  ties by keeping the order of the indices in this case.

\begin{definition} \label{def:ord-pattern}
	For $l \in \NN$, let $S_{l-1}$ be the set of permutations of
	$\{0,\ldots,l-1\}$. The \emph{$l$-ordinal pattern} is defined as the mapping
	$\Pi: \RR^l \to S_{l-1}$ that maps a vector $(x_i)_{i=0}^{l-1}$ to 
	the unique permutation $\pi$ satisfying
    $$ x_{\pi(0)} \geq x_{\pi(1)} \geq \ldots \geq x_{\pi(l-1)} $$
    and $\pi(\min\{i,j\}) < \pi(\max\{i,j\})$ if $x_i = x_j$ for $i \neq j$.
\end{definition}

Ordinal patterns have been introduced in order
to analyze noisy data sets which appear in medicine, neuroscience and finance \citep[cf.][]{bandt-pompe-02, keller-etal-07, sinn-etal-13}. They have already been used successfully in the estimation of the Hurst parameter \citep{sinn-keller-11}. Further applications include tests for structural breaks \citep{sinn-etal-12} and the analysis of the Kolmogorov-Sinai entropy of dynamical systems \citep{keller-etal-15}.

Ordinal patterns can be used nicely to capture stylized facts as trends or inversions of the direction which might be used to characterize and classify certain events. To our knowledge the present paper is the first approach to analyze the ordinal behavior which can be observed in clusters of extremes of time series. The advantages of the proposed method include that the whole analysis is stable under monotone transformations of the state space. This will be useful in our analysis. Furthermore, the ordinal structure is not destroyed by small perturbations of the data or by measurement errors. There are fast algorithms to analyze the relative frequencies of ordinal patterns in given data sets \citep[cf.][Section 1.4]{keller-etal-07}. 

In the future, ordinal patterns in clusters of extremes might be used to detect structural breaks in the extremes of the given time series \citep[cf.][]{unakafov-keller-18}. Dealing with `correlated' time
series, one could analyze the dependence between extreme events in a non-linear fashion as it has been developed in \citet{schnurr-14} and 
\citet{schnurr-dehling-17}. This might be advantageous in particular if the time series are on totally different scales. Finally, as we will point out in Section \ref{sec:app}, ordinal patterns at the beginning of a cluster of extremes might be used in order to forecast the length
of this cluster in an on-line analysis of data.

Our analysis is embedded in a different theoretical framework than the works of
\citet{markovich-14, markovich-16, markovich-17}, namely, we will assume 
that the stationary time series of interest, $(X_t)_{t \in \ZZ}$, is regularly
varying. Note that this is a common assumption in extreme value theory allowing for convenient extrapolation to the tails of the distribution. More background on the
theory of regularly varying time series will be provided in Section \ref{sec:reg-var}. In Section \ref{sec:clust-dist}, we show that both the distribution of the size of $u$-exceedance clusters, as defined in \eqref{eq:def-cluster}, and
the distribution of the ordinal pattern within a cluster converge to 
(typically non-degenerate) limit distributions in case of a regularly
varying time series. Based on a sliding window approach, non-parametric 
empirical estimators for the limit distributions are introduced in Section 
\ref{sec:asymptotics}. Under conditions, similar to those considered in \citet{davis-mikosch-09} for the estimation of the extremogram, consistency
(Proposition \ref{prop:consistency}) and asymptotic normality (Corollary \ref{coro:asymptotic-normality}) of the estimators are established. In Section \ref{sec:simu-study}, we consider the example of max-stable time series and provide sufficient conditions in terms of extremal coefficients for Corollary \ref{coro:asymptotic-normality} to hold. The conditions are
verified for a Brown--Resnick time series which is then simulated to
demonstrate the finite-sample behaviour of the estimators. Finally,
we apply the estimator to daily discharge data of the river Rhine at Cologne in Section \ref{sec:app}. The proofs of our results can be found in the appendix.

\section{Background: Regular Varying Time Series} \label{sec:reg-var}

Throughout this paper, we will assume that $X = (X_t)_{t \in \ZZ}$ is a 
stationary time series whose marginal distribution $F_0$, defined by 
$F_0(x) = \PP(X_0 \leq x)$, is in the max-domain of attraction of an extreme
value distribution, i.e., there exist constants $a_n > 0$, $b_n \in \RR$, such
that
\begin{align*} 
F_0^n (a_n x + b_n) \stackrel{n \to \infty}{\longrightarrow} G_0(x), \quad x \in \RR,
\end{align*} 
for some non-degenerate distribution $G_0$. Without loss of generality, we may assume
that $G_0$ is an $\alpha$-Fr\'echet distribution for some $\alpha>0$,
$$ G_0(x) = \Phi_\alpha(x) = \exp(- x^{-\alpha}), \qquad x >0, $$
and that $F_0$ has a finite lower endpoint, $\inf\{x \in \RR: \, F_0(x)>0\} > - \infty$. 
Both properties can be achieved by applying strictly monotone marginal 
transformations to $(X_t)_{t \in \ZZ}$ provided that $F_0$ is continuous. As 
these transformations are the same for each $t \in \ZZ$ -- remind that $X$ is
stationary -- they do not have any effect on ordinal structure of the data.
In particular, ordinal patterns in extremes are invariant under these transformations.

A convenient framework for our further analysis will be provided by regular variation. Among several equivalent definitions of multivariate regular variation \citep[cf.][for instance]{resnick-07, resnick-08}, we will make use of the following convenient one \citep[cf.][for instance]{basrak-davis-mikosch-02}: We say that the $d$-variate random vector $\bX = (X_{t_1},\ldots,X_{t_d})$, $t_1,\ldots, t_d \in \ZZ$, 
is \emph{multivariate regularly varying} with index $\alpha>0$ if, for some 
norm $\|\cdot\|$ on $\RR^d$, there exists a probability measure $\sigma$ on the
sphere $\mathbb{S}^{d-1} = \{ \bx \in [0,\infty)^d: \, \|\bx\| = 1\}$ such that 
$$ \frac{ \PP\left(\|\bX\| > rx, \, \bX / \|\bX\| \in \cdot \right) }{\PP\left( \|\bX\| > x\right)} \longrightarrow_w r^{-\alpha} \sigma(\cdot) $$
as $x \to \infty$, where $\to_w$ denotes weak convergence. The limit measure 
$\sigma$ is called spectral measure.
\medskip

By Corollary 5.18 in \citet{resnick-08}, multivariate regular variation of $\bX$
with spectral measure $\sigma$ is equivalent to the fact that the distribution 
function $F$ of $\bX$ is in the max-domain of attraction of a multivariate 
extreme value distribution, i.e.
\begin{align*} 
F^n(a_n x_1 + b_n, \ldots, a_n x_d + b_n) \stackrel{n \to \infty}{\longrightarrow} G(x_1,\ldots,x_d),
\qquad x_1,\ldots,x_d > 0. 
\end{align*}
The limit distribution $G$ necessarily has $\Phi_\alpha$ marginal distributions
and is of the form 
\begin{align*}
G(x_1,\ldots,x_d) = \exp\left(- \mu\left\{ [0,\infty)^d \setminus ([0,x_1] \times \ldots \times [0,x_d] ) \right\} \right),
\end{align*}
for some Radon measure $\mu$ on $E = [0,\infty)^d \setminus \{\mathbf{0}\}$, 
the so-called \emph{exponent measure} $\mu$ of $G$. The exponent measure $\mu$
and the spectral measure $\sigma$ are related via
\begin{align*}
\mu(\{ \bx \in E: \, \|\bx\| > r, \, \bx / \|\bx\| \in A\}) = r^{-\alpha} \sigma(A), \qquad r>0, \ A \subset \mathbb{S}^{d-1}.
\end{align*}

The time series $X$ is called \emph{regularly varying} if all the finite-dimensional 
margins $(X_{t_1},\ldots,X_{t_d})$, $t_1 < t_2 < \ldots < t_d \in \ZZ$, $d \in \NN$,
are multivariate regularly varying. By \citet{basrak-segers-09}, regular variation
of $X$ is equivalent to the existence of a process $Y = (Y_t)_{t \in \ZZ}$ with
$\PP(Y_0 > y) = y^{-\alpha}$ for $y \geq 1$ such that, for every $s < t \in \ZZ$,
\begin{align} \label{eq:tail-proc}
\PP\bigg( \left(\frac{X_s}x,\ldots,\frac{X_t}{x}\right) \in \cdot \, \bigg| \, X_0 > x \bigg) 
\longrightarrow_d \PP\left( (Y_s, \ldots, Y_t) \in \cdot \right), \quad \text{as } x \to \infty. 
\end{align}
The process $Y$ is called \emph{tail process} of $X$, see \citet{basrak-segers-09} and \citet{planinic-soulier-18} for more details and further properties.
\medskip

In the following, we will always assume that the time series $X$ is regularly
varying with tail process $Y$. Furthermore, the probability measure induced by
the random vector $(Y_i)_{i \in I}$ will be called $\mu_I$ for any index set
$I \subset \ZZ$. 

\section{Distribution of Clusters of Extremes and Ordinal Patterns} \label{sec:clust-dist}

In this section, we will analyze the limiting behaviour of the size and ordinal pattern of $u$-exceedance clusters in the time series $X$ with $u$-exceedances 
being defined as in \eqref{eq:def-cluster}. Intuitively, the following expression gives a plausible definition of the distribution of the size $C_u$ of a randomly selected $u$-exceedance cluster:
\begin{align*}
 \PP(C_u = l) ={}& \lim_{n \to \infty} \frac{\#\{ u\text{-exceedance clusters of size } l
	\text{ in } (X_t)_{t=-1}^n\}}{\#\{u\text{-exceedance clusters (of any size) in } (X_t)_{t=-1}^n\}}\\
={}& \lim_{n \to \infty} \frac{\sum_{k=0}^{n-l} \mathbf{1}_{\{X_{k-1} \leq u, X_k > u, \ldots, X_{k+l-1} > u, X_{k+l} \leq u \}}}
{\sum_{l \in \NN} \sum_{k=0}^{n} \mathbf{1}_{\{X_{k-1} \leq u, X_k > u, \ldots, X_{k+l-1} > u, X_{k+l} \leq u \}}}\\
={}& \lim_{n \to \infty} \frac{\sum_{k=0}^{n-l} \mathbf{1}_{\{X_{k-1} \leq u, X_k > u, \ldots, X_{k+l-1} > u, X_{k+l} \leq u \}}}
{\sum_{k=0}^{n} \mathbf{1}_{\{X_{k-1} \leq u, X_k > u\}}}, \qquad l \in \NN.
\end{align*}
If $X$ is ergodic, we can apply the pointwise Birkhoff-Khinchin theorem
and obtain that the above limit almost surely exists and equals
\begin{equation} \label{eq:distr-u-exceed}
 \PP(C_u = l) = \frac{\PP(X_{-1} \leq u, X_0 > u, \ldots, X_{l-1}>u, X_l \leq u)}{\PP(X_{-1} \leq u, X_0 > u)}, \quad l \in \NN.
\end{equation} 
Dividing both the enumerator and the denominator of \eqref{eq:distr-u-exceed} by
$\PP(X_0>u)$, we can see from relation \eqref{eq:tail-proc} that the
distribution of $C_u$ eventually becomes independent from the threshold $u$ as 
$u \to \infty$ provided that 
$$\mu_{\{-1,\ldots,l\}}\{ (x_i)_{i=-1}^l \in [0,\infty)^{l+2}: \, x_i = 0 \text{ or } x_i=1 \text{ for some } i=-1,\ldots,l\} = 0$$
for every $l \in \NN$:
\begin{align}
\lim_{u \to \infty} \PP(C_u = l) ={}& \lim_{u \to \infty} \frac{\PP(X_{-1} \leq u, X_0 > u, \ldots, X_{l-1}>u, X_l \leq u \mid X_0 > u)}{\PP(X_{-1} \leq u, X_0 > u \mid X_0 > u)} \nonumber \\
={}& \frac{\PP( Y_{-1} \leq 1, Y_0 > 1,\ldots, Y_{l-1} > 1, Y_l \leq 1)}{\PP(Y_{-1} \leq 1, Y_0 > 1)} \nonumber \\
={}& \frac{\mu_{\{-1,\ldots,l\}}([0,1] \times (1,\infty)^l \times [0,1])}{\mu_{\{-1,0\}}([0,1] \times (1,\infty))},  \quad l \in \NN. \label{eq:limit-len}
\end{align}

\begin{example}
 We consider two examples to compare the limiting distribution of $C_u$ as $u \to \infty$, i.e.\ the distribution of the cluster size according to our definition, with the limiting cluster size distribution in the classical setting which has been studied extensively in the literature, cf.~\citet{robert-09} and references therein. While both distributions coincide in the first examples, they significantly differ in the second one.
 \begin{enumerate}
   \item We consider a first order max-autoregressive model \citep[cf.][]{davis-resnick-89}
   \begin{equation} \label{eq:mar}
     X_t = \max\{ a X_{t-1}, (1-a) Z_t\}, \quad t \in \ZZ, 
   \end{equation}
   where $a \in [0,1]$ and $\{Z_t\}_{t \in \ZZ}$ is a unit Fr\'echet noise
   process. Equation \eqref{eq:mar} possesses a stationary solution with unit Fr\'echet margins that is regularly varying. For $t > 0$, the tail process $\{Y_t\}_{t \in \ZZ}$ is given by  
   $$ (Y_0, Y_1,\ldots, Y_t) =_d (Y, aY, \ldots, a^t Y) $$
   where $Y$ is a standard Pareto random variable. Furthermore, we have
   \begin{equation*}
     Y_{-1} = \begin{cases} 
        a^{-1}Y & \quad \text{with probability }   a,\\
        0       & \quad \text{with probability } 1-a.
        \end{cases}
   \end{equation*}
   Thus, for $l \in \NN$, we obtain
   \begin{align}
   \lim_{u \to \infty} \PP(C_u = l) 
   ={}& \frac{\PP( Y_{-1} \leq 1, Y_0 > 1,\ldots, Y_{l-1} > 1, Y_l \leq 1)}{\PP(Y_{-1} \leq 1, Y_0 > 1)} \nonumber \\
   ={}& \frac{\PP(Y_{-1} = 0, a^{l-1} < Y < a^l)}{\PP(Y_{-1}=0)} = a^{l-1} (1-a), \label{eq:geo}
   \end{align}
   i.e.\ the limiting distribution is a geometric distribution with parameter $1-a$. Alternatively, this distribution could also be derived
   from Proposition 1 in \citet{markovich-17} plugging the formulae
   for $\PP(T_2(u) = l+1)$ into the expression $\lim_{u \to \infty} 
   \PP(T_2(u) = l+1 \mid T_2(u) > 1)$ and using that the extremal index
   is given by $\theta=1-a$. However, obtaining a closed-form
   expression seems to be much simpler using the tail process.   
   
   Note that the limiting geometric distribution in \eqref{eq:geo} coincides with the limiting distribution for the size of clusters defined in the classical sense,
   cf.\ \citet{perfekt94}. This is due to the fact that, in the limit,
   exceedances over high thresholds always occur subsequently.  
   \item As second example, we consider a stationary moving maximum process
   \citep[cf.][for instance]{deheuvels83} defined by
   $$ X_t = \frac 1 2 \max\{ Z_t, Z_{t-2}\}, \quad t \in \ZZ, $$
   with $\{Z_t\}_{t \in \ZZ}$ being a unit Fr\'echet noise process.  
   By definition, we have that $\PP(X_t \leq u \mid X_0 > u) \to 1$
   for all $t \in 2 \ZZ + 1$. Consequently,
   $$ \lim_{u \to \infty} \PP(C_u > 1) = \lim_{u \to \infty}
    \frac{\PP(X_{-1} \leq u, X_0 > u, X_1 > u)}{\PP(X_{-1} \leq u, X_0 > u)}
    = 0,$$
   i.e.\ the distribution of $C_u$ converges weakly to a Dirac measure in $1$. 
   
   In contrast, the limiting cluster size distribution according to the classical definition is obviously a Dirac measure in $2$, that is, exceedances over high thresholds always occur in pairs. As the exceedances are separated by a non-exceedance, each pair is considered as two single clusters in our definition, while they belong to the same cluster according to the classical definition.  
 \end{enumerate}
\end{example}

Similarly to their size, we can investigate ordinal patterns in $u$-exceedance clusters. Here, for fixed $l \in \NN$, we are interested in the distribution of the $l$-ordinal pattern of a (randomly selected) $u$-exceedance cluster of size $l$:
\begin{align*}
\PP_{u,l}(\pi) ={}& \PP( \Pi((X_i)_{i=0}^{l-1}) = \pi \mid (X_i)_{i=0}^{l-1} \text{ is a } u\text{-exceedance cluster}) \\
={}&
\frac{\PP(\Pi((X_i)_{i=0}^{l-1})=\pi, X_{-1} \leq u, X_0>u,\ldots, X_{l-1}>u, X_l \leq u)}
{\PP(X_{-1} \leq u, X_0>u,\ldots, X_{l-1}>u, X_l \leq u)} 
\end{align*}
for each $\pi \in S_{l-1}$, i.e.\ $\PP_{u,l}$ defines a probability distribution on $S_{l-1}$. Again, this distribution converges as $u \to \infty$:
\begin{align}
\lim_{u \to \infty} & \PP_{u,l}(\pi)
={} \frac{\PP(\Pi((Y_i)_{i=0}^{l-1})=\pi, Y_{-1} \leq 1, Y_0>1,\ldots, Y_{l-1}>1, Y_l \leq 1)}
{\PP( Y_{-1} \leq 1, Y_0>1,\ldots, Y_{l-1}>1, Y_l \leq 1)} \nonumber
\displaybreak[0]\\
={}& \frac{\mu_{\{-1,\ldots,l\}}(\{((x,\by,z) \in [0,1] \times (1,\infty)^l \times [0,1]:\ \Pi(\by)=\pi\})}
{\mu_{\{-1,\ldots,l\}}([0,1] \times (1,\infty)^l \times [0,1]))}.
\label{eq:limit-ord}
\end{align}

\section{Asymptotic Results for Empirical Estimators} \label{sec:asymptotics}

According to Equation \eqref{eq:limit-len} and Equation \eqref{eq:limit-ord}, both the limit distribution of clusters and the limit distribution of ordinal patterns within a cluster are given by a ratio of the type 
$$\mu_{\{-1,\ldots,t\}}(A) / \mu_{\{-1,\ldots,t_0\}}(A_0)$$
for appropriate dimension $t, t_0 \in \NN_0$ and sets $A \subset [0,\infty) \times (1,\infty) \times [0,\infty)^t$ and 
$A_0 \subset [0,\infty) \times (1,\infty) \times [0,\infty)^{t_0}$ with $\mu_{\{-1,\ldots,t_0\}}(A_0)>0$, i.e.\ a ratio of measures of two sets that are bounded from below by $1$ in their second component. 

More precisely, in case of the cluster size distribution in \eqref{eq:limit-len}, we have $t=l$, $A = [0,1] \times (1,\infty)^l \times [0,1]$, $t_0=0$ and $A_0 = [0,1] \times (1,\infty)$, while, in case of the distribution of the $l$-ordinal pattern in \eqref{eq:limit-ord}, we have
$t=t_0=l$, $A = \{ (x, \by,z) \in [0,1] \times (1,\infty)^l \times [0,1]: \ \Pi(\by) = \pi\}$ and $A_0 = [0,1] \times (1,\infty)^l \times [0,1]$.

In the following, we will consider the general class of ratios of the above 
type, including both the limits in \eqref{eq:limit-len} and \eqref{eq:limit-ord} as special cases, and discuss their estimation from observations $X_{-1},\ldots,X_n$. It is worth noting that, making use of relation
$\mu_{\{-1,\ldots,t\}}(A) = \mu_{\{-1,\ldots,t+1\}}(A \times [0,\infty))$,
we may replace $t$ and $t_0$ by the maximum of the two, i.e.\ without loss of generality, we may assume that $t=t_0$.

Analogously to the calculations in Section \ref{sec:clust-dist}, one can show that
$$ \lim_{u \to \infty} \frac{\PP( (X_i)_{i=-1}^{t} \in uA)}{\PP( (X_i)_{i=-1}^{t} \in uA_0)} = \frac{\mu_{\{-1,\ldots,t\}}(A)} {\mu_{\{-1,\ldots,t\}}(A_0)} $$
provided that $\mu_{\{-1,\ldots,t\}}(\partial A) = \mu_{\{-1,\ldots,t\}}(\partial A_0) = 0$. This limit relation gives reason to
set a high threshold $u$ and use a ratio estimator of the type
\begin{equation} \label{eq:ratio-est}
\widehat{R}_{n, u}(A, A_0) = \frac{\widehat{P}_{n,u}(A)}{\widehat{P}_{n,u}(A_0)}
\end{equation}
where
\begin{equation} \label{eq:single-est}
 \widehat{P}_{n,u}(A) = \frac 1 n \sum_{k=1}^{n-t} \mathbf{1}_{ \{(X_i)_{i=k-1}^{k+t} \in u A\}}
\end{equation}       
is the empirical counterpart of the probability $\PP( (X_i)_{i=-1}^{t} \in uA)$.                  

We will show asymptotic properties of the ratio estimator $\widehat{R}_{n,u_n}(A, A_0)$ for some appropriate sequence of thresholds $(u_n)_{n \in \NN}$ such that $u_n \to \infty$ and $n \PP(X_0 > u_n) \to \infty$ as $n \to \infty$.
To obtain consistency, we also need a mixing condition for the time series
$(X_t)_{t \in \ZZ}$ expressed in terms of the $\alpha$-mixing coefficients 
\begin{align} \label{eq:def-alpha}
\alpha_h ={}&  \sup_{A,B \in \calB(\RR^\NN)} |\PP((X_t)_{t \leq 0} \in A, \, (X_t)_{t \geq h} \in B) \nonumber \\
& \hspace{2cm} - \PP((X_t)_{t \leq 0} \in A) \PP((X_t)_{t \geq h} \in B)|, \qquad h \geq 0.
\end{align}
Here, we will assume that $(X_t)_{t \in \ZZ}$ is $\alpha$-mixing,
i.e.\ $\alpha_h \to 0$ as $h \to \infty$, and that the coefficients $(\alpha_h)_{h=0}^\infty$ are summable. The proof is postponed to the appendix.

\begin{proposition} \label{prop:consistency}
 Let $(X_t)_{t \in \ZZ}$ be a regularly varying, strictly stationary time series with tail process $(Y_t)_{t \in \ZZ}$ whose finite-dimensional distributions are given by
 $(\mu_I)_{I \subset \ZZ}$. Assume that the corresponding $\alpha$-mixing coefficients satisfy 
 \begin{equation} \label{eq:alpha-decay-consistency}
 \alpha_n \in \mathcal{O}(n^{-\delta}) \quad \text{for all } n \in \NN \text{ and some } \delta > 1.
 \end{equation}
 Moreover, let $A_0, A_1 \subset [0,\infty) \times (1,\infty) \times [0,\infty)^t$ be continuity sets w.r.t.\ $\mu_{\{-1,\ldots,t\}}$
 such that $\mu_{\{-1,\ldots,t\}}(A_0) >  0$.
 Then, for any sequence $(u_n)_{n \in \NN}$ satisfying $u_n \to \infty$
 and 
 \begin{equation*}
  n^{\delta/(1+\delta)} \PP(X_0 > u_n) \to \infty,
 \end{equation*}
 we have
 $$ \widehat{R}_{n, u_n}(A_1, A_0) \to_p \frac{\mu_{\{-1,\ldots,t\}}(A_1)}{\mu_{\{-1,\ldots,t\}}(A_0)}. $$	
\end{proposition}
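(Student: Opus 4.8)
The plan is to treat the numerator and the denominator of the ratio separately and then combine them. Write $p_n := \PP(X_0 > u_n)$ and, for a set $A \subset [0,\infty)\times(1,\infty)\times[0,\infty)^t$, set $\xi_k^A := \mathbf{1}_{\{(X_i)_{i=k-1}^{k+t}\in u_nA\}}$, so that $\widehat{P}_{n,u_n}(A) = n^{-1}\sum_{k=1}^{n-t}\xi_k^A$. The key step is the claim that, for every such set $A$ that is a $\mu_{\{-1,\ldots,t\}}$-continuity set,
$$ \widehat{P}_{n,u_n}(A)/p_n \;\to_p\; \mu_{\{-1,\ldots,t\}}(A). $$
Granting this, the proposition follows immediately: applying the claim to $A=A_0$ and $A=A_1$ and using $\mu_{\{-1,\ldots,t\}}(A_0)>0$, the continuous mapping theorem for $(a,b)\mapsto a/b$ gives $\widehat{R}_{n,u_n}(A_1,A_0) = [\widehat{P}_{n,u_n}(A_1)/p_n]/[\widehat{P}_{n,u_n}(A_0)/p_n] \to_p \mu_{\{-1,\ldots,t\}}(A_1)/\mu_{\{-1,\ldots,t\}}(A_0)$; note that the unknown normalisation $p_n$ cancels, so it need not be estimated.

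To prove the claim I would use a first-moment/variance argument and Chebyshev's inequality. For the mean: since every $A$ in question has its second coordinate bounded below by $1$, the event $\{(X_i)_{i=-1}^t\in u_nA\}$ forces $X_0>u_n$, hence $\EE[\widehat{P}_{n,u_n}(A)] = \frac{n-t}{n}\PP\big((X_i)_{i=-1}^t\in u_nA\big) = \frac{n-t}{n}\,p_n\,\PP\big((X_i/u_n)_{i=-1}^t\in A \mid X_0>u_n\big)$. By the tail-process convergence \eqref{eq:tail-proc} together with the portmanteau theorem (this is where the continuity-set assumption on $A$ enters), the conditional probability tends to $\PP\big((Y_i)_{i=-1}^t\in A\big)=\mu_{\{-1,\ldots,t\}}(A)$, so $\EE[\widehat{P}_{n,u_n}(A)]/p_n\to\mu_{\{-1,\ldots,t\}}(A)$.

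For the variance I would expand $\Var(\widehat{P}_{n,u_n}(A)) = n^{-2}\sum_{k,k'=1}^{n-t}\Cov(\xi_k^A,\xi_{k'}^A)$ and split the double sum according to $h=|k-k'|$, with a cut-off $h_n^\ast\to\infty$. The diagonal contributes $n^{-2}\sum_k\Var(\xi_k^A) = O(p_n/n)$. For $1\le h\le h_n^\ast$, since $\xi_k^A\le\mathbf{1}_{\{X_k>u_n\}}$ one has $\EE[\xi_k^A\xi_{k'}^A]\le\PP(X_0>u_n,X_h>u_n)\le p_n$ while $(\EE\xi_k^A)^2 = O(p_n^2)$, so $|\Cov(\xi_k^A,\xi_{k'}^A)| = O(p_n)$ and this range contributes $O(h_n^\ast p_n/n)$. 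For $h>h_n^\ast$, the two length-$(t+2)$ blocks are separated by a gap of $h-t-1\ge 1$ (eventually), so the definition \eqref{eq:def-alpha} of the mixing coefficients bounds $|\Cov(\xi_k^A,\xi_{k'}^A)|\le\alpha_{h-t-1}$, and \eqref{eq:alpha-decay-consistency} makes this range $O\big(n^{-1}\sum_{h>h_n^\ast}(h-t-1)^{-\delta}\big) = O\big((h_n^\ast)^{-(\delta-1)}/n\big)$. Altogether $\Var(\widehat{P}_{n,u_n}(A))/p_n^2 = O\big(h_n^\ast/(np_n) + 1/(np_n^2(h_n^\ast)^{\delta-1})\big)$.

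The crux — and the step I expect to be the main obstacle — is to choose $h_n^\ast$ so that both error terms vanish, which turns out to be possible \emph{precisely} under the standing threshold hypothesis. One needs $h_n^\ast = o(np_n)$ and $h_n^\ast\gg(np_n^2)^{-1/(\delta-1)}$; a cut-off with both properties exists iff $(np_n^2)^{-1/(\delta-1)}=o(np_n)$, which rearranges to $n^\delta p_n^{\delta+1}\to\infty$, i.e.\ to $n^{\delta/(1+\delta)}\PP(X_0>u_n)\to\infty$ — exactly the assumed condition, which (as $\delta/(1+\delta)<1$) also forces $np_n\to\infty$, so the diagonal and near-diagonal contributions are negligible too. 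With such an $h_n^\ast$, $\Var(\widehat{P}_{n,u_n}(A))/p_n^2\to 0$, and Chebyshev's inequality combined with the mean convergence yields the claim $\widehat{P}_{n,u_n}(A)/p_n\to_p\mu_{\{-1,\ldots,t\}}(A)$, completing the proof. The remaining ingredients — the passage through \eqref{eq:tail-proc} and the covariance bound for $\alpha$-mixing indicators — are routine bookkeeping.
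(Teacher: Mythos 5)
Your proposal is correct and follows essentially the same route as the paper: a Chebyshev argument with the variance of $\widehat{P}_{n,u_n}(A)/\PP(X_0>u_n)$ bounded by splitting the covariance sum at a lag cutoff (trivial bound of order $\PP(X_0>u_n)$ for small lags, $\alpha$-mixing decay for large lags), mean convergence via the tail process, and the continuous mapping theorem for the ratio. The only cosmetic difference is that the paper fixes the balanced cutoff $r_n=\PP(X_0>u_n)^{-1/\delta}$ explicitly, whereas you argue that a suitable cutoff exists exactly under the hypothesis $n^{\delta/(1+\delta)}\PP(X_0>u_n)\to\infty$, which amounts to the same calculation.
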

\medskip

To further establish asymptotic normality, we will assume the following mixing condition which combines a condition on the $\alpha$-mixing coefficients and an anti-clustering condition.
 
\begin{condition}[M]
	There exist a sequence $\{u_n\}_{n \in \NN} \subset \RR$ of thresholds and 
	an intermediate sequence
	$\{r_n\}_{n \in \NN} \subset \NN$ 
	with $\lim_{n \to \infty} u_n = \lim_{n \to \infty} r_n = \infty$,
	$ \lim_{n \to \infty} n \PP(X_0 > u_n) = \infty$, $\lim_{n \to \infty} 
	r_n \PP(X_0 > u_n) = 0$ such that
	\begin{equation} \label{eq:mixing}
	\lim_{n \to \infty} \frac 1 {\PP(X_0 > u_n)} \sum\nolimits_{h=r_n}^\infty \alpha_h = 0
	\end{equation}
	and
	\begin{equation} \label{eq:anticlustering}
	\lim_{k \to \infty} \limsup_{n \to \infty} \sum\nolimits_{h=k}^{r_n} \PP(X_h > u_n \mid X_0 > u_n) = 0.
	\end{equation}
\end{condition}

\begin{remark}
Condition (M) is an adapted version of the condition in \citet{davis-mikosch-09} who consider a ratio estimator of a similar type
for the so-called extremogram. Similarly to the condition for consistency given in Equation \eqref{eq:alpha-decay-consistency}, also the mixing condition in Equation \eqref{eq:mixing} implies conditions on the decay of the sequence $\{\alpha_h\}_{h \in \NN}$ which will be discussed below in more detail.
	
The anti-clustering condition in Equation \eqref{eq:anticlustering} is very similar to Condition (2.8) in \citet{davis-hsing-95} and Condition 4.1 in \citet{basrak-segers-09}. By Proposition 4.2 in \citet{basrak-segers-09}, it implies that the tail process $\{Y_t\}_{t \in \ZZ}$ converges to $0$ almost surely as $|t| \to \infty$ and thus ensures finite cluster size.
\end{remark}

To prove the asymptotic normality of the ratio estimators, we first make use of
the following auxiliary result in \citet{davis-mikosch-09}, adapted to our setting.

\begin{lemma}{\citep[][Thm.~3.1]{davis-mikosch-09}} \label{lem:variance}
	Let $(X_t)_{t \in \ZZ}$ be a regularly varying, strictly stationary time series 
	with tail process $(Y_t)_{t \in \ZZ}$ whose finite-dimensional distributions 
	are given by $(\mu_I)_{I \subset \ZZ}$. Moreover, let 
	$A \subset [0,\infty) \times (1,\infty) \times [0,\infty)^t$ be a continuity 
	set w.r.t.~$\mu_{\{-1,\ldots,t\}}$. If Condition (M) holds, then
	\begin{align*}
	& \lim_{n \to \infty} n \PP(X_0 > u_n) \Var\left(\frac{\widehat{P}_{n,u_n}(A)}{\PP(X_0 > u_n)}\right) \\
	={}& \mu_{\{-1,\ldots,t\}}(A) 
	+ 2 \sum\nolimits_{h=1}^\infty \PP((Y_i)_{i=-1}^t \in A, \, (Y_i)_{i=h-1}^{h+t} \in A) < \infty.
	\end{align*}
\end{lemma}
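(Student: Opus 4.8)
The plan is to expand the variance into a diagonal part and a covariance part and to identify the limit of each. Write $p_n := \PP(X_0 > u_n)$ and, for $k = 1,\dots,n-t$, $I_k := \mathbf{1}_{\{(X_i)_{i=k-1}^{k+t} \in u_n A\}}$; since $A \subset [0,\infty) \times (1,\infty) \times [0,\infty)^t$, the event $\{I_k=1\}$ forces $X_k > u_n$, so that $\PP(I_k=1) \le p_n$. Using stationarity, a routine expansion of the variance of $\widehat{P}_{n,u_n}(A) = n^{-1}\sum_{k=1}^{n-t} I_k$ gives
\begin{equation} \label{eq:var-expand}
n p_n \Var\!\left( \frac{\widehat{P}_{n,u_n}(A)}{p_n} \right) = \frac{n-t}{n} \cdot \frac{\Var(I_1)}{p_n} + \frac{2}{n p_n} \sum_{h=1}^{n-t-1} (n-t-h)\, \Cov(I_1, I_{1+h}),
\end{equation}
and it remains to show that the first term converges to $\mu_{\{-1,\dots,t\}}(A)$ and the second to $2\sum_{h=1}^\infty \PP( (Y_i)_{i=-1}^t \in A,\, (Y_i)_{i=h-1}^{h+t} \in A )$.

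For the diagonal term I would condition on $\{X_1 > u_n\}$ and apply the tail-process convergence \eqref{eq:tail-proc}, with the conditioning event shifted from time $0$ to time $1$ (legitimate by stationarity of $X$): since $A$ is a $\mu_{\{-1,\dots,t\}}$-continuity set, this gives $\PP(I_1=1)/p_n \to \PP( (Y_i)_{i=-1}^t \in A ) = \mu_{\{-1,\dots,t\}}(A)$, and since $\PP(I_1=1)^2/p_n \le p_n \cdot \PP(I_1=1)/p_n \to 0$, the diagonal term converges to $\mu_{\{-1,\dots,t\}}(A)$.

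For the covariance sum I would split at the intermediate scale $r_n$ provided by Condition~(M). On the near range $1 \le h \le r_n$, decompose $\Cov(I_1,I_{1+h}) = \PP(I_1=1, I_{1+h}=1) - \PP(I_1=1)^2$; the contribution of the centring term $\PP(I_1=1)^2$ is at most $2 r_n \PP(I_1=1)^2 / p_n \le 2 r_n p_n \to 0$. For the main term, conditioning on $\{X_1 > u_n\}$ and applying \eqref{eq:tail-proc} to the two shifted coordinate blocks cut out by $A$ yields, for each fixed $h$, $\PP(I_1=1, I_{1+h}=1)/p_n \to \PP( (Y_i)_{i=-1}^t \in A,\, (Y_i)_{i=h-1}^{h+t} \in A )$, while $(n-t-h)/n \to 1$; to interchange this limit with the summation over the range $1 \le h \le r_n$, whose length grows with $n$, I would invoke the anti-clustering condition \eqref{eq:anticlustering} through the bound $\PP(I_1=1, I_{1+h}=1)/p_n \le \PP(X_h > u_n \mid X_0 > u_n)$, which makes the tail of the series uniformly small in $n$; the same bound together with Fatou's lemma shows that the limiting series is finite, being dominated by $\sum_{h \ge 1} \PP(Y_h > 1) < \infty$. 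On the far range $r_n < h \le n-t-1$, a translation of the time origin by $1+t$ makes $I_1$ measurable with respect to $\sigma( (X_s)_{s \le 0} )$ and $I_{1+h}$ measurable with respect to $\sigma( (X_s)_{s \ge h-1-t} )$, so that $|\Cov(I_1,I_{1+h})| \le \alpha_{h-1-t}$ and this part of \eqref{eq:var-expand} is bounded in absolute value by $2 p_n^{-1} \sum_{h > r_n - 1 - t} \alpha_h$, which tends to $0$ by the mixing condition \eqref{eq:mixing} (a fixed index shift being harmless). Combining the three contributions yields the asserted formula.

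I expect the main difficulty to lie in the near-diagonal range $1 \le h \le r_n$: there one must exchange $\lim_{n\to\infty}$ with a sum of length $r_n \to \infty$ whose individual summands $\PP(I_1=1,I_{1+h}=1)/p_n$ remain of order one for small $h$. This is precisely where the anti-clustering condition does its work, making $\sum_{h \ge k} \PP(I_1=1,I_{1+h}=1)/p_n$ uniformly small in $n$ and so licensing a dominated-convergence argument; without it the near-diagonal sum could diverge or converge to a different value. By contrast, the centring terms are absorbed by the balance $r_n p_n \to 0$, the far-diagonal range is handled by the standard covariance bound in terms of the $\alpha$-mixing coefficients, and the diagonal term is immediate from regular variation; these steps are routine.
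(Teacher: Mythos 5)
Your argument is correct and coincides with the proof the paper relies on: Lemma \ref{lem:variance} is not proved in the paper but imported from Theorem 3.1 of \citet{davis-mikosch-09}, and your decomposition --- diagonal term via regular variation, lags $1\le h\le r_n$ via tail-process convergence with the anti-clustering condition \eqref{eq:anticlustering} supplying the domination that justifies interchanging limit and sum and the finiteness of the limiting series, and lags beyond $r_n$ via the $\alpha$-mixing sum \eqref{eq:mixing} --- is exactly that argument adapted to the present sets. The only point to tidy is the fixed index shift $\alpha_{h-t-2}$ in the far range, which is indeed harmless: split instead at $r_n+t+2$, bound the finitely many boundary lags by $\PP(X_0>u_n)+\alpha_{r_n}/\PP(X_0>u_n)\to 0$ (the latter follows from the first term of \eqref{eq:mixing}), and the remaining tail is then precisely the sum appearing in \eqref{eq:mixing}.
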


We further proceed by noting that Equation \eqref{eq:mixing} and $r_n \PP(X > u_n) \to 0$ imply that
$$ \lim_{n \to \infty} r_n \sum\nolimits_{h=r_n}^\infty \alpha_h = 0.$$
Thus,
$$\liminf_{n \to \infty} n \sum\nolimits_{h=n}^\infty \alpha_h = 0$$
and, consequently, $\liminf_{n \to \infty} n^2 \alpha_n = 0$. Imposing the existence
of a finite limit superior, we may conclude that there exists some $\delta \geq 0$ such that
\begin{equation} \label{eq:alpha-decay}
\alpha_n \in \mathcal{O}(n^{-\delta}) \quad \text{for some }\delta \geq 2.
\end{equation}
Using this slight strengthening of Condition (M), we can verify asymptotic normality of the estimators $\widehat{P}_{n,u_n}/\PP(X_0 > u_n)$. The proof is postponed to the appendix.

\begin{theorem} \label{thm:asymptotic-normality}
	Let $(X_t)_{t \in \ZZ}$ be a regularly varying, strictly stationary time 
	series with tail process $(Y_t)_{t \in \ZZ}$ whose finite-dimensional distributions are 
	denoted by $(\mu_I)_{I \subset \ZZ}$. Moreover, let $A_0, \ldots, A_N \subset 
	[0,\infty) \times (1,\infty) \times [0,\infty)^t$ be continuity sets w.r.t.\
	$\mu_{\{-1,\ldots,t\}}$. We further assume that Condition (M) and 
	\eqref{eq:alpha-decay} hold and 
	\begin{equation} \label{eq:add-cond}
	\lim_{n \to \infty} n^{\delta/(4+\delta)} \PP(X_0 > u_n) = \infty,
	\end{equation}
	if $\delta >2$ in \eqref{eq:alpha-decay} or
	\begin{equation} \label{eq:add-cond-delta0}
	\lim_{n \to \infty} n^{1/2} \frac{\PP(X_0 > u_n)^{3/2}}{|\log(\PP(X_0 > u_n))|} = \infty
	\end{equation}
	if $\delta=2$. Then,
	\begin{align*}
	& \sqrt{n \PP(X_0 > u_n)} \left( \begin{array}{c} \frac{\widehat{P}_{n,u_n}(A_0)}{\PP(X_0 > u_u)} - \PP((X_i)_{i=-1}^t \in u_n A_0 \mid X_0 > u_n) \\ 
	\vdots \\ \frac{\widehat{P}_{n,u_n}(A_m)}{\PP(X_0 > u_n)} - \PP((X_i)_{i=-1}^t \in u_n A_N \mid X_0 > u_n) \end{array} \right) \\
	& \hspace{9cm} \longrightarrow_d  \mathcal{N}(\mathbf{0}, \Sigma),
	\end{align*}
	as $n \to \infty$, where $\Sigma = (\sigma_{jl})_{0 \leq j,l \leq N}$ with
	\begin{align*}
	 \sigma_{j,l} ={}& \mu_{\{-1,\ldots,t\}}(A_j \cap A_l) + \sum\nolimits_{h=1}^\infty \PP((Y_i)_{i=-1}^t \in A_j, \ (Y_i)_{i=h-1}^{h+t} \in A_l) \\
	& \hspace{2.9cm} + \sum\nolimits_{h=1}^\infty \PP((Y_i)_{i=-1}^t \in A_l, \ (Y_i)_{i=h-1}^{h+t} \in A_j).
	\end{align*}
\end{theorem}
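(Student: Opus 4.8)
The plan is to pass to a univariate limit theorem via the Cram\'er--Wold device, read the limiting covariance off Lemma~\ref{lem:variance}, and obtain the normality from the big-block/small-block argument of \citet{davis-mikosch-09}, adapted to the present functionals. Write $p_n=\PP(X_0>u_n)$. Fix $\mathbf c=(c_0,\dots,c_N)\in\RR^{N+1}$ and set
$$Z_k \;=\; \sum_{j=0}^N c_j\,\mathbf 1_{\{(X_i)_{i=k-1}^{k+t}\in u_nA_j\}},\qquad 1\le k\le n-t .$$
Each $Z_k$ is a functional of the $t+2$ consecutive observations $X_{k-1},\dots,X_{k+t}$, is bounded by $\|\mathbf c\|_1:=\sum_j|c_j|$, and vanishes unless $X_k>u_n$, so that $\EE|Z_k|=O(p_n)$. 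Since $\EE\widehat P_{n,u_n}(A_j)=\tfrac{n-t}{n}\,\PP((X_i)_{i=-1}^t\in u_nA_j)$ and $\PP((X_i)_{i=-1}^t\in u_nA_j)/p_n$ is bounded, the difference between centring at $\EE\widehat P_{n,u_n}(A_j)/p_n$ and at $\PP((X_i)_{i=-1}^t\in u_nA_j\mid X_0>u_n)$ is $O\bigl((p_n/n)^{1/2}\bigr)\to0$ after multiplication by $\sqrt{np_n}$; hence the linear combination $\sum_{j} c_j\sqrt{np_n}\bigl(\widehat P_{n,u_n}(A_j)/p_n-\PP((X_i)_{i=-1}^t\in u_nA_j\mid X_0>u_n)\bigr)$ equals, up to a deterministic $o(1)$,
$$T_n\;:=\;\frac1{\sqrt{np_n}}\sum_{k=1}^{n-t}\bigl(Z_k-\EE Z_k\bigr),$$
and it suffices to prove $T_n\to_d\mathcal N(0,v)$ with $v=\mathbf c^\top\Sigma\mathbf c=\sum_{0\le j,l\le N}c_jc_l\,\sigma_{j,l}$.

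The limiting variance is supplied by Lemma~\ref{lem:variance}. Expanding $\Var\bigl(\sum_{k}(Z_k-\EE Z_k)\bigr)$ into the diagonal and the lag-$h$ covariance contributions and dividing by $np_n$, the diagonal part converges to $\sum_{j,l}c_jc_l\,\mu_{\{-1,\dots,t\}}(A_j\cap A_l)$ (the mean-square corrections vanish since $\PP((X_i)_{i=-1}^t\in u_nA_j)^2/p_n\le p_n\to0$), while the off-diagonal part, with the series in $h$ dominated by means of the anti-clustering condition \eqref{eq:anticlustering} together with the mixing bound \eqref{eq:mixing}, converges to the two series defining $\sigma_{j,l}$. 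For $j=l$ this is exactly Lemma~\ref{lem:variance}, and the cross terms follow from the identical two-block argument applied to pairs of sets (using $\mathbf 1_{A_j}\mathbf 1_{A_l}=\mathbf 1_{A_j\cap A_l}$). Hence $\Var(T_n)\to v$.

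For the normality itself we run the blocking scheme. Choose $r_n,s_n\in\NN$ with $r_n\to\infty$, $r_np_n\to0$, $p_n^{-1}\sum_{h\ge r_n}\alpha_h\to0$ and the anti-clustering bound of Condition~(M), and with $s_n\to\infty$, $s_n=o(r_n)$, $(n/r_n)\,\alpha_{s_n}\to0$; partition $\{1,\dots,n-t\}$ into $m_n\sim n/r_n$ consecutive big blocks of length $\approx r_n$ separated by gaps of length $s_n$, plus a terminal remainder, and let $S_i$ be the centred sum of the $Z_k$ over the $i$-th big block. One then checks: (i) the gaps and the remainder contribute negligibly in $L^2$ --- each is a sum of at most $r_n$ consecutive centred terms, hence of variance $O(r_np_n)$ by the estimates behind Lemma~\ref{lem:variance}, so their total contribution to $T_n$ has variance $O(m_ns_n/n)=O(s_n/r_n)\to0$; (ii) the big-block sums are asymptotically independent, the error in replacing their joint characteristic function by the product of the marginals being of order $m_n\alpha_{s_n}\to0$; (iii) the independent surrogates $S_i/\sqrt{np_n}$ satisfy a Lyapunov condition, using the sparsity bound $\EE S_i^2=O(r_np_n)$ together with $|S_i|\le2\|\mathbf c\|_1 r_n$. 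With $\Var(T_n)\to v$ from the previous step, the Lindeberg--Feller theorem for row-wise independent arrays yields $\sum_iS_i/\sqrt{np_n}\to_d\mathcal N(0,v)$, hence $T_n\to_d\mathcal N(0,v)$, and the Cram\'er--Wold device completes the proof.

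The main obstacle is the sequence bookkeeping: one must verify that $r_n$ and $s_n$ with all of the above properties exist. Under the polynomial decay \eqref{eq:alpha-decay}, \eqref{eq:mixing} forces $r_n\gg p_n^{-1/(\delta-1)}$ while $r_np_n\to0$ forces $r_n\ll p_n^{-1}$, the pair $s_n=o(r_n)$, $(n/r_n)\alpha_{s_n}\to0$ forces $r_n\gg n^{1/(\delta+1)}$ (up to slowly varying factors), and the Lyapunov step~(iii) adds a further lower bound on $np_n$ relative to $r_n$. Reconciling these constraints is possible precisely under the additional hypothesis \eqref{eq:add-cond} when $\delta>2$, and under \eqref{eq:add-cond-delta0} in the boundary case $\delta=2$, where the logarithmic slack is exactly what rescues the argument. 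This verification, carried out separately for the two cases, is where all the quantitative hypotheses beyond Condition~(M) are consumed; the remaining steps are a routine transcription of the reasoning in \citet{davis-mikosch-09}.
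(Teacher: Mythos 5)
Your Cram\'er--Wold reduction and the identification of the limiting variance via the argument behind Lemma \ref{lem:variance} match the paper. The normality step, however, is where you diverge and where the gap lies. The paper does \emph{not} use a big-block/small-block scheme at all: it verifies the hypotheses of Rio's CLT for $\alpha$-mixing triangular arrays (Thm.~4.4 in \citet{rio-17}), whose key requirement is an integral condition involving the inverse mixing-rate function $\alpha^{-1}$ and the quantile function of $|Z_{n,k}|$; the case distinction $\delta>2$ versus $\delta=2$ and the precise thresholds \eqref{eq:add-cond} and \eqref{eq:add-cond-delta0} fall out of estimating that integral. This is exactly how the authors avoid the blocking condition \eqref{eq:alpha-r} of \citet{davis-mikosch-09} (see their remark following the corollary). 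You instead take the Davis--Mikosch blocking route, and you defer the decisive issue -- the existence of block and gap lengths compatible with all constraints -- to a final paragraph that only asserts it works ``precisely under'' \eqref{eq:add-cond} and \eqref{eq:add-cond-delta0}.

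That assertion does not hold with the bounds you actually state. Writing $p_n=\PP(X_0>u_n)$: your Lyapunov step (iii), using $|S_i|\le C r_n$ and $\EE S_i^2=O(r_np_n)$, gives a Lyapunov ratio of order $m_n r_n^2 p_n/(np_n)^{3/2}\asymp r_n/\sqrt{np_n}$, forcing $r_n=o(\sqrt{np_n})$; your block-independence step (ii) needs $(n/r_n)\alpha_{s_n}\to0$ with $\alpha_h=O(h^{-\delta})$ and $s_n=o(r_n)$, forcing $r_n\gg n^{1/(\delta+1)}$. These are compatible only if $n^{(\delta-1)/(\delta+1)}p_n\to\infty$, and since $(\delta-1)/(\delta+1)>\delta/(\delta+4)$ for every $\delta>2$, this is strictly stronger than the hypothesis \eqref{eq:add-cond} you are supposed to work under; in effect your argument proves a DM09-type statement (comparable to \eqref{eq:add-cond-mk} plus a blocking condition), not the theorem as stated. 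A further, related slip: you reuse the sequence $r_n$ from Condition (M) as the big-block length, but Condition (M) only guarantees \emph{some} $r_n$ with $r_np_n\to0$, \eqref{eq:mixing} and \eqref{eq:anticlustering}; nothing ensures this $r_n$ also satisfies $r_n\gg n^{1/(\delta+1)}$ and $r_n\ll\sqrt{np_n}$, so the block length would have to be chosen afresh and the variance estimates redone, and the quantitative obstruction above still binds. To prove the theorem with the stated (weaker) conditions you would need either a sharper treatment of the Lindeberg term than the crude bound $|S_i|\le Cr_n$, or, as the paper does, to abandon blocking in favour of Rio's quantile-based criterion.
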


\begin{remark}
By regular variation,
\begin{align*}
   \frac{\EE\{\widehat{P}_{n,u_n}(A_j)\}}{\PP(X_0 > u_n)}
={}& \PP((X_i)_{i=-1}^t \in u_n A_j \mid X_0 > u_n)\\
\to{}& 
\PP((Y_i)_{i=-1}^t \in A_j) = \mu_{\{-1,\ldots,t\}}(A_j) \quad (n \to \infty),
\end{align*}
i.e.\ the conditional distribution is approximately normal around the
desired value even though the bias might be not negligible asymptotically.
If the limit expression vanishes, i.e.\ if we have $\mu_{\{-1,\ldots,t\}}(A_j) = 0$, we obtain the asymptotic
variance $\sigma_{jj}=0$, i.e.\ the limit distribution is degenerate. 
\end{remark}	
\medskip

Using the same arguments as in the proof of Corollary 3.3 in \citet{davis-mikosch-09},
we obtain the following corollary.

\begin{corollary} \label{coro:asymptotic-normality}
	Under the assumptions of Theorem \ref{thm:asymptotic-normality}, we have that
	\begin{align*}
	& \sqrt{n \PP(X_0 > u_n)} \left( \begin{array}{c} \widehat{R}_{n,u_n}(A_1,A_0) - \frac{\PP((X_i)_{i=-1}^t \in u_n A_1)}{\PP((X_i)_{i=-1}^t \in u_n A_0)} \\ 
	\vdots \\ \widehat{R}_{n,u_n}(A_N,A_0) - \frac{\PP((X_i)_{i=-1}^t \in u_n A_N)}{\PP((X_i)_{i=-1}^t \in u_n A_0)} \end{array} \right) \\
	& \hspace{6cm} \longrightarrow_d \mathcal{N}(\mathbf{0}, \mu_{\{-1,\ldots,t\}}(A_0)^{-4} F \Sigma F^\top),  
	\end{align*}
	as $n \to \infty$, where 
	$$ F = \begin{pmatrix} 
	 - \mu_{\{-1,\ldots,t\}}(A_1) &
	\mu_{\{-1,\ldots,t\}}(A_0) & 0 & 0 & \ldots & 0 \\
	- \mu_{\{-1,\ldots,t\}}(A_2) & 0 & \mu_{\{-1,\ldots,t\}}(A_0) & 0 & \ldots & 0 \\
	\vdots & \vdots & \vdots & \ddots & \vdots & \vdots \\
	- \mu_{\{-1,\ldots,t\}}(A_N) & 0 & 0 & 0 & \ldots & \mu_{\{-1,\ldots,t\}}(A_0)
	\end{pmatrix} $$
	and $\Sigma$ is given in Theorem \ref{thm:asymptotic-normality}. \\       
	If, in addition,
	\begin{equation*}
	\lim_{n \to \infty} \sqrt{n\PP(X_0 > u_n)} \left[ \frac{\PP((X_i)_{i=-1}^t \in u_n A_i)}{\PP((X_i)_{i=-1}^t \in u_n A_0)} -  \frac{\mu_{\{-1,\ldots,t\}}(A_i)}{\mu_{\{-1,\ldots,t\}}(A_0)} \right] = 0,
	\end{equation*}
	for $i=1,\ldots,N$, then   
	\begin{align*}
	& \sqrt{n \PP(X_0 > u_n)} \left( \begin{array}{c} \widehat{R}_{n,u_n}(A_1,A_0) - \frac{\mu_{\{-1,\ldots,t\}}(A_1)}{\mu_{\{-1,\ldots,t\}}(A_0)} \\ 
	\vdots \\ \widehat{R}_{n,u_n}(A_N,A_0) - \frac{\mu_{\{-1,\ldots,t\}}(A_N)}{\mu_{\{-1,\ldots,t\}}(A_0)} \end{array} \right) \\
	& \hspace{6.5cm} \to \mathcal{N}(\mathbf{0}, \mu_{\{-1,\ldots,t\}}(A_0)^{-4} F \Sigma F^\top). 
	\end{align*}
\end{corollary}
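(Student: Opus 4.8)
The plan is to deduce the corollary from the joint central limit theorem of Theorem~\ref{thm:asymptotic-normality} by a single application of the multivariate delta method, exactly as for Corollary~3.3 in \citet{davis-mikosch-09}. Write $v_n = \PP(X_0 > u_n)$ and $\nu = \mu_{\{-1,\ldots,t\}}$ for brevity, collect the normalised single estimators into the vector $\widehat{\mathbf Q}_n = \big(\widehat{P}_{n,u_n}(A_j)/v_n\big)_{j=0}^{N}$, and set $\mathbf p^{(n)} = \big(\PP((X_i)_{i=-1}^t \in u_n A_j \mid X_0 > u_n)\big)_{j=0}^{N}$. Introduce the smooth map $g\colon \{(q_0,\ldots,q_N)\in\RR^{N+1}: q_0 \neq 0\} \to \RR^N$, $g(q_0,\ldots,q_N) = (q_1/q_0,\ldots,q_N/q_0)$. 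Since the common factor $v_n$ cancels in each ratio, $\big(g(\widehat{\mathbf Q}_n)\big)_j = \widehat{R}_{n,u_n}(A_j,A_0)$ for $j=1,\ldots,N$; and because the second coordinate of each $A_j$ is bounded below by $1$, the conditioning on $\{X_0 > u_n\}$ is automatic, whence $\big(g(\mathbf p^{(n)})\big)_j = \PP((X_i)_{i=-1}^t \in u_n A_j)/\PP((X_i)_{i=-1}^t \in u_n A_0)$.

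First I would record the two ingredients. Theorem~\ref{thm:asymptotic-normality} gives $\sqrt{n v_n}\,\big(\widehat{\mathbf Q}_n - \mathbf p^{(n)}\big) \to_d \mathcal N(\mathbf 0, \Sigma)$, which in particular implies $\widehat{\mathbf Q}_n \to_p \mathbf q := (\nu(A_0),\ldots,\nu(A_N))$, using $\mathbf p^{(n)} \to \mathbf q$ by regular variation (the computation displayed in the Remark following Theorem~\ref{thm:asymptotic-normality}). Since $\nu(A_0) > 0$, the limit point $\mathbf q$ lies in the open set on which $g$ is continuously differentiable, and a direct differentiation shows that the $j$-th row of the Jacobian $Dg(\mathbf q)$ equals $\nu(A_0)^{-2}\,(-\nu(A_j), 0,\ldots,\nu(A_0),\ldots,0)$, the entry $\nu(A_0)$ sitting in column $j$; that is, $Dg(\mathbf q) = \nu(A_0)^{-2} F$ with $F$ the matrix displayed in the statement. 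Then I would run the delta method: a first-order Taylor expansion of $g$ about $\mathbf p^{(n)}$, together with $Dg(\mathbf p^{(n)}) \to Dg(\mathbf q)$, yields $\sqrt{n v_n}\,\big(g(\widehat{\mathbf Q}_n) - g(\mathbf p^{(n)})\big) = Dg(\mathbf q)\,\sqrt{n v_n}\,\big(\widehat{\mathbf Q}_n - \mathbf p^{(n)}\big) + o_p(1)$, so Slutsky's lemma gives convergence in distribution to $\mathcal N\big(\mathbf 0, Dg(\mathbf q)\Sigma Dg(\mathbf q)^\top\big) = \mathcal N\big(\mathbf 0, \nu(A_0)^{-4} F\Sigma F^\top\big)$. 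In view of the identification of $g(\mathbf p^{(n)})$ made above, this is precisely the first displayed assertion.

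For the second displayed assertion I would simply split, for $j=1,\ldots,N$,
\[ \sqrt{n v_n}\big(\widehat{R}_{n,u_n}(A_j,A_0) - \nu(A_j)/\nu(A_0)\big) = \sqrt{n v_n}\big(\widehat{R}_{n,u_n}(A_j,A_0) - (g(\mathbf p^{(n)}))_j\big) + \sqrt{n v_n}\big((g(\mathbf p^{(n)}))_j - \nu(A_j)/\nu(A_0)\big). \]
The first vector converges in distribution to $\mathcal N(\mathbf 0, \nu(A_0)^{-4}F\Sigma F^\top)$ by the first part, while the second vector tends to $\mathbf 0$ entrywise by the additional hypothesis, so one more application of Slutsky's lemma concludes the proof.

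The only step requiring genuine care --- the main, and rather mild, obstacle --- is that the delta method is applied here with a \emph{moving} centering sequence $\mathbf p^{(n)}$ rather than a fixed point, so one must check that the linearisation error is $o_p(1)$ after multiplication by $\sqrt{n v_n}$, uniformly as $\mathbf p^{(n)}$ varies. This holds because $g$ is continuously differentiable on a fixed neighbourhood of $\mathbf q$ (so that $Dg$ is uniformly continuous there), $\mathbf p^{(n)}\to\mathbf q$ lies in that neighbourhood for $n$ large, and $\widehat{\mathbf Q}_n - \mathbf p^{(n)} = O_p\big((n v_n)^{-1/2}\big)$; everything else is the routine bookkeeping of the multivariate delta method and Slutsky's lemma.
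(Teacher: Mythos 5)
Your argument is correct and is essentially the route the paper takes: the paper simply invokes the delta-method argument of Corollary~3.3 in Davis and Mikosch (2009), which is exactly your map $g(q_0,\ldots,q_N)=(q_1/q_0,\ldots,q_N/q_0)$ applied to the vector of normalised estimators from Theorem~\ref{thm:asymptotic-normality}, with the moving centering handled as you describe and the second display following by Slutsky. The only implicit point worth stating is $\mu_{\{-1,\ldots,t\}}(A_0)>0$ (needed for $g$ to be differentiable at the limit and for $\widehat{P}_{n,u_n}(A_0)>0$ with probability tending to one), which you do assume and which the corollary requires anyway for its covariance matrix to make sense.
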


\begin{remark}
  Alternatively to our proofs, we could also show asymptotic normality 
  of the vectors $(\widehat{P}_{n,u_n}(A_j))_{j=0}^N$ and $(\widehat{R}_{n,u_n}(A_j))_{j=1}^N$, respectively, using slightly adapted versions
  of Theorem 3.2 and Corollary 3.3 in \citet{davis-mikosch-09}.
  Therein, besides Condition (M), they also assume the conditions
  \begin{equation} \label{eq:alpha-r}
  \lim\nolimits_{n \to \infty} n \PP(X_0 > u_n) \cdot \alpha_{r_n} = 0
  \end{equation}
  and 
  \begin{equation} \label{eq:add-cond-mk}
  \lim\nolimits_{n \to \infty} n^{1/3} \PP(X_0 > u_n) = \infty.
  \end{equation}
  By using different techniques in the proof and extending Condition (M) by the slightly stronger assumption \eqref{eq:alpha-decay}, we are able to
  drop condition \eqref{eq:alpha-r}. Furthermore, we replace condition \eqref{eq:add-cond-mk} by conditions \eqref{eq:add-cond} and \eqref{eq:add-cond-delta0}, respectively.
  
  For $\delta >2$, condition \eqref{eq:add-cond} is weaker than condition 
  \eqref{eq:add-cond-mk}, which is the limiting case of condition \eqref{eq:add-cond} as $\delta \searrow 2$. If $\alpha_h$ even decays exponentially, i.e.\ if condition \eqref{eq:add-cond} holds for $\delta$
  being arbitrarily large, the condition
  simplifies to $\lim_{n \to \infty} n^{1-\varepsilon} \PP(X_0 > u_n) = \infty$ for some $\varepsilon$ which is close to the minimal assumption
  $\lim_{n \to \infty} n \PP(X_0 > u_n) = \infty$
  stated in Condition (M).  
  
  For $\delta=2$, condition \eqref{eq:add-cond-delta0} is slightly stronger than condition \eqref{eq:add-cond-mk}.	However, it is still weaker than
$$\lim\nolimits_{n \to \infty} n^{1/3 - \varepsilon} \PP(X_0 > u_n) = \infty$$
  for $\varepsilon > 0$.
  
  Thus, even though our assumptions are not necessarily weaker than the assumptions in \citet{davis-mikosch-09} due to the fact that we further
  assume \eqref{eq:alpha-decay}, our results allow for a simplification 
  of Equations \eqref{eq:alpha-r} and \eqref{eq:add-cond-mk}.
\end{remark}
\medskip

In practical applications, the usability of central limit theorems in the flavor 
of Corollary \ref{coro:asymptotic-normality} for uncertainty assessment of the
resulting estimates is often limited by two obstacles:
\begin{itemize}
	\item The rate of convergence includes the unknown threshold exceedance probability
	$\PP(X_0 > u_n)$.
	\item The asymptotic (co-)variances are complex expressions including series expressions as given in Theorem \ref{thm:asymptotic-normality}.	          
\end{itemize} 
In the case of Corollary \ref{coro:asymptotic-normality}, however, one can cope with both difficulties in the following way:
\begin{itemize}
	\item Applying Lemma \ref{lem:variance} to the set $A = [0,\infty) \times (1,\infty)$, we obtain that, under Condition (M), $$\frac{\widehat{P}_{n,u_n}([0,\infty) \times (1,\infty))}{\PP(X_0 > u_n)} = 
	\frac{1}{n \PP(X_0 > u_n)} \sum\nolimits_{k=1}^n \mathbf{1}_{\{X_k > u_n\}} \to_p 1.$$
	Therefore, Corollary \ref{coro:asymptotic-normality} stills holds true
	if we replace the exceedance probability 
	$\PP(X_0 > u_n)$ by its empirical
	counterpart $n^{-1} \sum_{k=1}^n \mathbf{1}_{\{X_k > u_n\}}$.
	\item Similarly to the asymptotic (co-)variances for the extremogram
	estimators, the asymptotic (co-)variances arising in Corollary \ref{coro:asymptotic-normality} can be estimated via various bootstrap techniques such as a stationary bootstrap \citep{davis-mikosch-cribben-12} or a multiplier block bootstrap
	\citep{drees-15}. In Section \ref{sec:simu-study}, we will make use 
	of the multiplier block bootstrap which has been demonstrated to provide more accurate and robust results than the stationary bootstrap 
	in a simulation study in \citet{DDSW18}.
\end{itemize}

\section{Example: Max-Stable Time Series} 
\label{sec:simu-study}

    An important example of a stationary regularly varying times series
    $(X_t)_{t \in \ZZ}$ is a stationary max-stable time series with 
	$\alpha$-Fr\'echet margins. According to \citet{dehaan-84}, such a
	time series can be represented as
	\begin{equation} \label{eq:spectral-maxstable}
	X_t = \bigvee\nolimits_{j=1}^\infty \left(\Gamma_j^{-1} W_t^{(j)}\right)^{1/\alpha}, \qquad t \in \ZZ, 
	\end{equation}
	where $\{\Gamma_j\}_{j \in \NN}$ denote the arrival times of a unit rate
	Poisson process and $(W_t^{(j)})_{t \in \ZZ}$, $j \in \NN$, are independent copies of
	a nonnegative time series $(W_t)_{t \in \ZZ}$ such that $\EE W_t = 1$ for all
	$t \in \ZZ$. Then, $(X_t)_{t \in \ZZ}$ is regularly varying with index $\alpha$ and its tail process $(Y_t)_{t \in \ZZ}$ is of the form
	\begin{equation} \label{eq:tail-maxstable}
	Y_t = P \cdot \widetilde{W_t}^{1/\alpha}, \qquad t \in \ZZ,
	\end{equation}
	where $P$ is an $\alpha$-Pareto random variable, i.e.\ $\PP(P>x) = x^{-\alpha}$,
	$x>1$, and $(\widetilde W_t)_{t \in \ZZ}$ is an independent time series with
	$$ \PP\left(\widetilde W \in A\right) = \int_{[0,\infty)^{\ZZ}} w(0) \mathbf{1}_{w /w(0) \in A} \PP(W \in \mathrm{d}w), \quad A \subset [0,\infty)^{\ZZ},$$
	see \citet{dombry-ribatet-15} for more details. 
    \medskip
	
	The dependence structure of a max-stable time series is often summarized by 
	its extremal coefficient function, that is, a sequence $\{\theta(h)\}_{h \in \ZZ} \subset [1,2]$
	given by the relation
	$$ \PP(X_h \leq x, X_0 \leq x) = \PP(X_0 \leq x)^{\theta(h)}, \qquad x > 0, \ h \in \ZZ.$$
	In particular, $X_h=X_0$ a.s.\ iff $\theta(h)=1$ and $X_h$ and $X_0$ are
	(asymptotically) independent iff $\theta(h)=2$.
	
	The extremal coefficient function can be used to provide sufficient conditions for condition (M). The proof is postponed to the appendix.
	
	\begin{proposition} \label{prop:maxstable}
	  Let $(X_t)_{t \in \ZZ}$ be a strictly stationary max-stable time series with $\alpha$-Fr\'echet margins and extremal coefficient function
	  $\{\theta(h)\}_{h \in \ZZ}$ such that $h \mapsto \theta(h)$ is 
	  non-increasing on $\NN_0$. If there exist a sequence $\{u_n\}_{n \in \NN} \subset \RR$ of thresholds and an intermediate sequence
	  $\{r_n\}_{n \in \NN} \subset \NN$ 
	  with $\lim_{n \to \infty} u_n = \lim_{n \to \infty} r_n = \infty$,
	  $ \lim_{n \to \infty} n \PP(X_0 > u_n) = \infty$, $\lim_{n \to \infty} 
	  r_n \PP(X_0 > u_n) = 0$ such that
	   \begin{equation} \label{eq:mixing-maxstable}
	   \lim_{n \to \infty} u_n^{\alpha} \sum\nolimits_{h=1}^\infty h^2 \left(2 - \theta(h+r_n)\right) = 0
	  \end{equation}	
	  and 	
	  \begin{equation} \label{eq:anticlustering-maxstable}
	  \lim_{k \to \infty} \sum\nolimits_{h=k}^{r_n} 2 - \theta(h) = 0,
	  \end{equation}
	  then Condition (M) holds.
	\end{proposition}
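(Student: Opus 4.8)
The plan is to verify the two defining relations \eqref{eq:mixing} and \eqref{eq:anticlustering} of Condition~(M) for the threshold sequence $\{u_n\}$ and the intermediate sequence $\{r_n\}$ provided by the hypothesis; the growth requirements $u_n\to\infty$, $r_n\to\infty$, $n\PP(X_0>u_n)\to\infty$ and $r_n\PP(X_0>u_n)\to 0$ are assumed outright. Since the margins are $\alpha$-Fr\'echet we have $\PP(X_0>u)=1-\exp(-u^{-\alpha})$, so that $u_n^{-\alpha}/2\le\PP(X_0>u_n)\le u_n^{-\alpha}$ for all large $n$; in particular the factor $1/\PP(X_0>u_n)$ in \eqref{eq:mixing} is, up to a constant, the factor $u_n^{\alpha}$ appearing in \eqref{eq:mixing-maxstable}.

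The central ingredient for \eqref{eq:mixing} is an estimate of the $\alpha$-mixing coefficients of a max-stable time series in terms of its bivariate extremal coefficients. Working from the spectral representation \eqref{eq:spectral-maxstable}, a single Poisson atom $(\Gamma_j,(W_t^{(j)})_{t\in\ZZ})$ can influence both blocks $(X_t)_{t\le 0}$ and $(X_t)_{t\ge h}$ only on an event whose probability is controlled by the overlap $\EE[\min(W_s,W_t)]=2-\theta(t-s)$ of the spectral functions; a union bound over all pairs $(s,t)$ with $s\le 0$, $t\ge h$ then produces a $\beta$-mixing bound, hence, since $2\alpha_h\le\beta_h$, a bound of the form
$$ \alpha_h \;\le\; C\sum_{s\le 0}\sum_{t\ge h}\bigl(2-\theta(t-s)\bigr) \;=\; C\sum_{g\ge h}(g-h+1)\,\bigl(2-\theta(g)\bigr), $$
where the equality counts, for each gap $g=t-s\ge h$, the $g-h+1$ admissible pairs and uses stationarity. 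Summing over $h\ge r_n$ and interchanging the order of summation gives
$$ \sum_{h\ge r_n}\alpha_h \;\le\; C\sum_{g\ge r_n}\frac{(g-r_n+1)(g-r_n+2)}{2}\,\bigl(2-\theta(g)\bigr) \;\le\; C'\sum_{i\ge 1}i^2\,\bigl(2-\theta(r_n+i)\bigr), $$
the $i=0$ contribution being absorbed using that $\theta$ is non-increasing. Multiplying by $1/\PP(X_0>u_n)\asymp u_n^{\alpha}$ and invoking \eqref{eq:mixing-maxstable} yields \eqref{eq:mixing}.

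For \eqref{eq:anticlustering}, the defining property of the extremal coefficient, $\PP(\max(X_0,X_h)\le u)=\PP(X_0\le u)^{\theta(h)}$, combined with inclusion--exclusion gives, with $p_n:=\PP(X_0>u_n)$,
$$ \PP(X_0>u_n,\,X_h>u_n) \;=\; 2p_n-1+(1-p_n)^{\theta(h)}. $$
A second-order Taylor expansion of $p\mapsto(1-p)^{\theta}$, legitimate because $\theta(h)\in[1,2]$, gives $(1-p_n)^{\theta(h)}\le 1-\theta(h)p_n+2p_n^2$ once $p_n\le 1/2$, hence
$$ \PP(X_h>u_n\mid X_0>u_n) \;\le\; \bigl(2-\theta(h)\bigr)+2p_n . $$
Summing over $h=k,\dots,r_n$ and using $r_n p_n\to 0$ we obtain
$$ \limsup_{n\to\infty}\sum_{h=k}^{r_n}\PP(X_h>u_n\mid X_0>u_n) \;\le\; \limsup_{n\to\infty}\sum_{h=k}^{r_n}\bigl(2-\theta(h)\bigr), $$
which tends to $0$ as $k\to\infty$ by \eqref{eq:anticlustering-maxstable}. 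This is \eqref{eq:anticlustering}, and together with the previous step Condition~(M) follows.

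The main obstacle is the extremal-coefficient bound on $\alpha_h$ used in the second paragraph: one must turn the heuristic that two blocks of coordinates of a max-stable process separated by a large gap are nearly independent into a quantitative estimate whose defect is controlled, pair by pair, by the bivariate extremal coefficients. This can be done either by quoting the known mixing results for max-infinitely divisible random fields or by re-deriving the bound directly from the Poisson point process underlying \eqref{eq:spectral-maxstable}; in either case some care is needed with constants and, above all, with the combinatorial bookkeeping, so that the final estimate reproduces exactly the $h^2$-weighted, $r_n$-shifted series appearing in \eqref{eq:mixing-maxstable}. The remaining arguments are elementary.
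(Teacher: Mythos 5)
Your proposal is correct and takes essentially the same route as the paper: the paper likewise bounds $\alpha_h$ by the extremal-coefficient double sum (quoting the mixing results of Dombry and Eyi-Minko for max-infinitely divisible processes), performs the same resummation into an $h^2$-weighted, $r_n$-shifted series using monotonicity of $\theta$, and reduces \eqref{eq:anticlustering} to \eqref{eq:anticlustering-maxstable} by showing $\PP(X_h>u_n\mid X_0>u_n)\le 2-\theta(h)+O(u_n^{-\alpha})$ and invoking $r_n\PP(X_0>u_n)\to 0$. Your Taylor bound on $(1-p_n)^{\theta(h)}$ simply replaces the paper's equivalent use of $x-x^2/2\le 1-e^{-x}\le x$ for the Fr\'echet margins.
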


	The additional assumption in the second part of the corollary that ensures the
	asymptotic unbiasedness of the estimator cannot be verified in this general 
	setting. However, for the closely related extremogram \citep{davis-mikosch-09},
	we have, from Equation \eqref{eq:bounds-extremogram} that
	$$ \left| \PP(X_h > u_n \mid X_0 > u_n)- (2-\theta(h)) \right| \in \mathcal{O}(u_n^{-\alpha}) $$ 
	\citep[see also][Lemma A.1]{buhl-klueppelberg-18}, i.e.\ 
	$$ \sqrt{n \PP(X(0) > u_n)} \left| \PP(X_h > u_n \mid X_0 > u_n)- (2-\theta(h)) \right| \to 0 $$ 
	holds if and only if $n u_n^{-3\alpha} \to 0$. A similar behaviour might be expected for the conditional probabilities in Corollary \ref{coro:asymptotic-normality} which are of the same type. 
\medskip

In the following simulation study, we will focus on one of the most popular models for max-stable processes, namely the Brown--Resnick process, that is, a stationary max-stable time series
with unit Fr\'echet margins and processes $(W_t)_{t \in \ZZ}$ and 
$(\widetilde W_t)_{t \in \ZZ}$ in \eqref{eq:spectral-maxstable} and 
\eqref{eq:tail-maxstable}, respectively, of the form
$$ W_t = \widetilde W_t = \exp\left(G_t - \Var(G_t)/2\right), \quad t \in \ZZ,$$
for a centered Gaussian time series $(G_t)_{t \in \ZZ}$ with $G_0=0$ a.s.\ and
stationary increments. Thus, $(X_t)_{t \in \ZZ}$ is a stationary max-stable
process and its law is uniquely determined by the semi-variogram
$$ \textstyle \gamma(h) = \frac 1 2 \Var(G_{h} - G_0), \quad h \in \ZZ,$$
see \citet{kabluchko-etal-09}. In applications, often Brown--Resnick processes associated to a semi-variogram of power type
$$ \gamma(h) = C |h|^\beta, \quad h \in \ZZ,$$
for some $C > 0$ and $\beta \in (0,2]$ are considered.

Similarly to \citet{cho-etal-16, buhl-klueppelberg-18, buhl-etal-19, buhl-klueppelberg-19} for spatial and spatio-temporal Brown-Resnick processes, we can verify that such a Brown--Resnick process satisfies the assumptions of our central limit theorems (Theorem \ref{thm:asymptotic-normality} and Corollary \ref{coro:asymptotic-normality}, respectively). As our assumptions are different from the ones in the papers mentioned above, we will verify them independently making use of our results for general max-stable processes in Proposition \ref{prop:maxstable}. The proof is postponed to the appendix. 

\begin{corollary} \label{coro:br}
  Let $(X_t)_{t \in \ZZ}$ a Brown--Resnick time series associated to a semi-variogram $\gamma$ satisfying $\gamma(h) \geq C |h|^\varepsilon$ 
  for all $h \in \ZZ$ and some $C, \varepsilon > 0$. Then, choosing
  $u_n \sim n^{\beta_1}$ for some $\beta_1 \in (0,1)$ and $r_n \sim n^{\beta_2}$ for some $\beta_2 \in (0,\beta_1)$, the assumptions
  of Theorem \ref{thm:asymptotic-normality} and, thus, of the first part of Corollary \ref{coro:asymptotic-normality}, are satisfied.
\end{corollary}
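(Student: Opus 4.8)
The plan is to verify the hypotheses of Theorem~\ref{thm:asymptotic-normality} one by one --- regular variation of $(X_t)_{t\in\ZZ}$ with tail process~\eqref{eq:tail-maxstable} being part of the max-stable structure recalled above --- since the first part of Corollary~\ref{coro:asymptotic-normality} needs nothing more. Everything hinges on a single estimate, a stretched-exponential bound for the bivariate co-exceedance decay. For a Brown--Resnick process the bivariate extremal coefficient is $\theta(h) = 2\Phi(\sqrt{\gamma(h)}/2)$ with $\Phi$ the standard normal distribution function (up to the usual normalising constant in the argument, which affects only constants below); combining this with the Gaussian tail bound $1-\Phi(x)\le\tfrac12 e^{-x^2/2}$ and the assumption $\gamma(h)\ge C|h|^\varepsilon$ gives
\[ 2-\theta(h) = 2\left(1-\Phi(\sqrt{\gamma(h)}/2)\right) \le e^{-\gamma(h)/8} \le e^{-(C/8)|h|^\varepsilon}, \qquad h\in\ZZ . \]
Since the margins are unit Fr\'echet, $\alpha=1$ and $\PP(X_0>u_n) = 1-\exp(-u_n^{-1}) \sim u_n^{-1} \sim n^{-\beta_1}$, so $n\PP(X_0>u_n)\sim n^{1-\beta_1}\to\infty$ and $r_n\PP(X_0>u_n)\sim n^{\beta_2-\beta_1}\to0$ as $0<\beta_2<\beta_1<1$.

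First I would establish Condition~(M) through Proposition~\ref{prop:maxstable}. The elementary requirements on $\{u_n\}$ and $\{r_n\}$ have just been checked, and the monotonicity of $h\mapsto\theta(h)$ on $\NN_0$ required there follows from that of the power-type semi-variogram. For the anti-clustering condition~\eqref{eq:anticlustering-maxstable}, the displayed bound gives $\sum_{h=k}^{r_n}(2-\theta(h)) \le \sum_{h\ge k}e^{-(C/8)h^\varepsilon}$, the tail of a convergent series, hence $\to0$ as $k\to\infty$ uniformly in $n$. For the mixing condition~\eqref{eq:mixing-maxstable}, the inequality $(h+r_n)^\varepsilon\ge\tfrac12(h^\varepsilon+r_n^\varepsilon)$ (concavity of $x\mapsto x^\varepsilon$ when $\varepsilon\le1$, and a fortiori when $\varepsilon\ge1$) lets one factor the series,
\[ \sum_{h=1}^\infty h^2\left(2-\theta(h+r_n)\right) \le e^{-(C/16)r_n^\varepsilon}\sum_{h=1}^\infty h^2 e^{-(C/16)h^\varepsilon} = C_1\,e^{-(C/16)r_n^\varepsilon}, \]
whence $u_n^\alpha\sum_{h\ge1}h^2(2-\theta(h+r_n)) \lesssim n^{\beta_1}e^{-(C/16)n^{\beta_2\varepsilon}}\to0$, the stretched exponential dominating the polynomial factor. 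Thus Proposition~\ref{prop:maxstable} applies and Condition~(M) holds.

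Next I would verify~\eqref{eq:alpha-decay}; this is the one step that is not routine asymptotics, as it requires a quantitative bound on the $\alpha$-mixing coefficients of the max-stable time series. Here I would invoke the standard strong-mixing estimates for stationary max-(infinitely divisible) processes, which control the mixing coefficients by a series in the residual dependence --- schematically $\alpha_h\lesssim\sum_{j\ge h}j\,(2-\theta(j))$, the series being summable thanks to the stretched-exponential decay (cf.\ the mixing arguments used for Brown--Resnick processes in \citet{buhl-klueppelberg-18} and the references therein) --- and combine them with the displayed bound to obtain $\alpha_h\lesssim e^{-ch^\varepsilon}$. In particular $\alpha_n\in\mathcal{O}(n^{-\delta})$ for \emph{every} $\delta\ge0$, so~\eqref{eq:alpha-decay} holds with $\delta$ as large as we please; as then $\delta>2$, condition~\eqref{eq:add-cond-delta0} is irrelevant and only~\eqref{eq:add-cond} remains. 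Since $\delta/(4+\delta)\to1$ and $\beta_1<1$, I would fix $\delta$ with $\delta/(4+\delta)>\beta_1$; then $n^{\delta/(4+\delta)}\PP(X_0>u_n)\sim n^{\delta/(4+\delta)-\beta_1}\to\infty$, which is~\eqref{eq:add-cond}. All hypotheses of Theorem~\ref{thm:asymptotic-normality} are then in force, and the first part of Corollary~\ref{coro:asymptotic-normality} follows at once.

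The main obstacle is exactly the step just described: turning the abstract mixing condition into the concrete rate~\eqref{eq:alpha-decay} requires a genuine mixing theorem for max-stable processes rather than a direct computation --- though once invoked it yields far more than needed (faster-than-polynomial decay), so the remaining steps amount to bookkeeping with stretched exponentials against polynomials. A minor point to watch is the monotonicity hypothesis of Proposition~\ref{prop:maxstable}: it is automatic for power-type variograms $\gamma(h)=C|h|^\beta$ (the setting of the preceding discussion, with $\varepsilon=\beta$), but under the bare lower bound $\gamma(h)\ge C|h|^\varepsilon$ the coefficient $\theta$ need not be monotone and a small variant of that proposition would be needed.
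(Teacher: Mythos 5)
Your proposal is correct and follows essentially the same route as the paper's proof: a stretched-exponential bound on $2-\theta(h)$ from the Gaussian tail (the paper uses $\theta(h)=2\Phi(\sqrt{\gamma(h)/2})$ and the Mills-ratio asymptotic, you use a direct tail bound, which only changes constants), then the extremal-coefficient series bound on $\alpha_h$ (your ``standard'' mixing estimate is exactly the Dombry--Eyi-Minko bound \eqref{eq:alphabound} already invoked in the proof of Proposition \ref{prop:maxstable}) to get \eqref{eq:alpha-decay} for every $\delta$, hence \eqref{eq:add-cond} for $\beta_1<1$, and finally Condition (M) via Proposition \ref{prop:maxstable} by the same factorization of the stretched exponential against $r_n^\varepsilon$. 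Your closing remark about the monotonicity hypothesis of Proposition \ref{prop:maxstable} is a fair observation that the paper's own proof passes over silently (its double-sum rearrangement does not actually use it), but it does not affect the validity of your argument.
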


\begin{figure} 
	\includegraphics[width=0.99\textwidth]{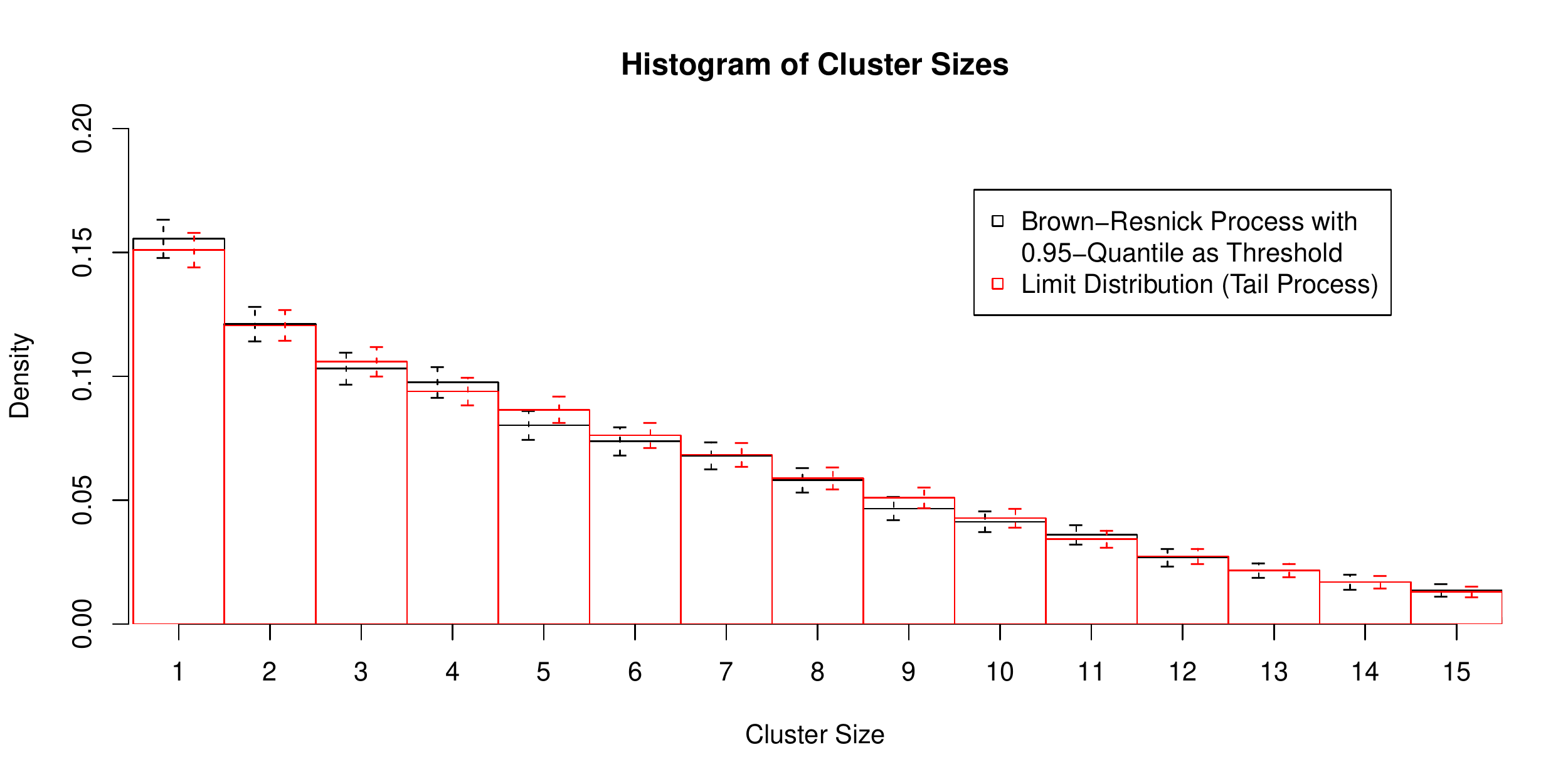}
	\caption{Histogram of the cluster size distribution for estimates from a simulated Brown--Resnick process (black) and the corresponding limit distribution (red). $95\,\%$ confidence intervals are added, obtained via a multiplier block bootstrap (black) and from the theoretical asymptotic distribution (red), respectively.} \label{fig:clustersize}
\end{figure}

\begin{figure}
	\includegraphics[width=0.99\textwidth]{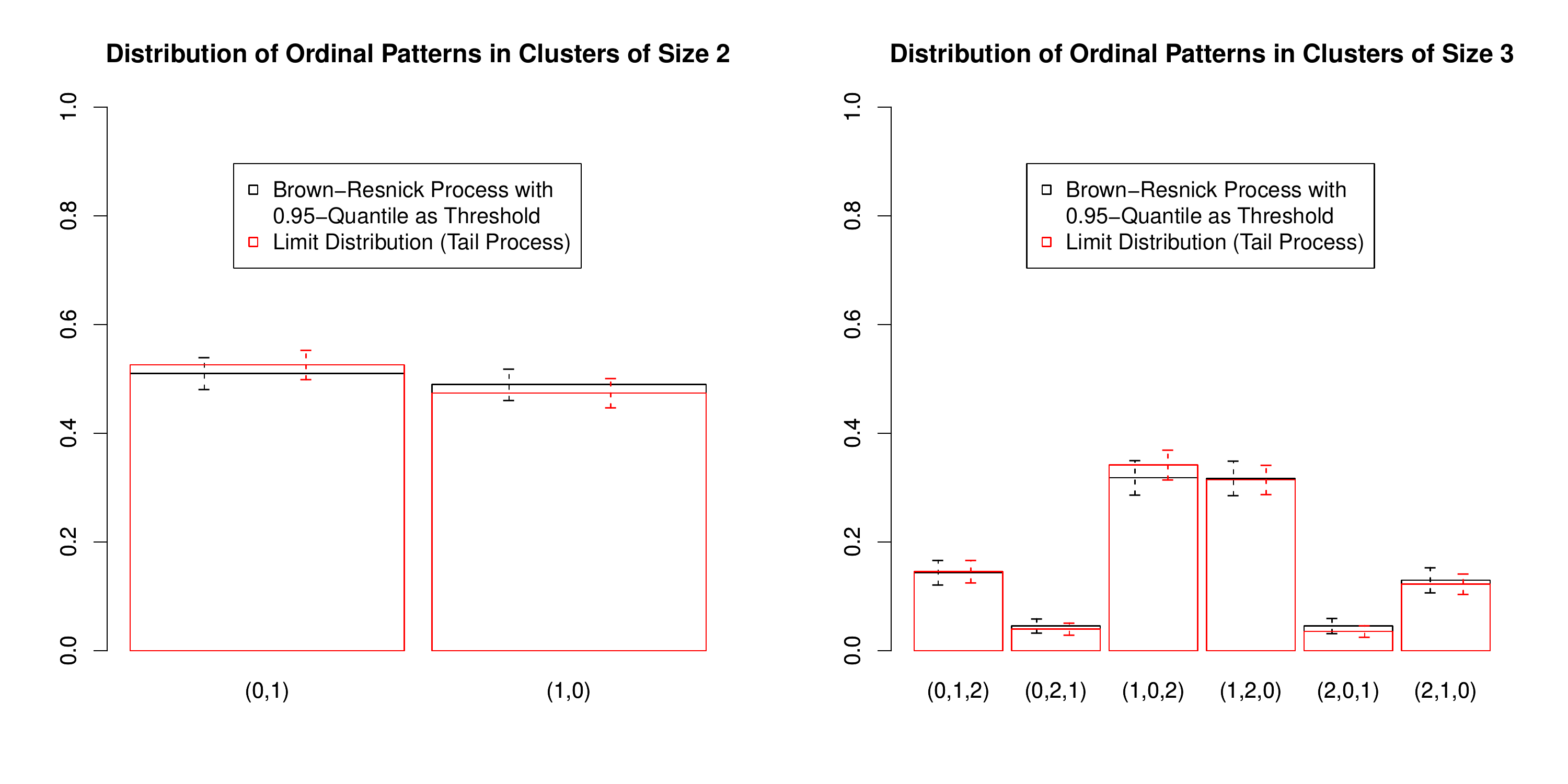}
	\caption{Bar charts of the distributions of $2$- (left) and $3$-ordinal patterns (right) within exceedance clusters estimated from a simulated Brown--Resnick process (black) and the corresponding limit distributions (red). The corresponding $95\,\%$ confidence intervals are obtained via a multiplier block bootstrap (black) and from the theoretical asymptotic distribution (red), respectively.}  \label{fig:ordinalpatterns}
\end{figure}

We now simulate a Brown--Resnick process to demonstrate the performance of the estimators of the type $\widehat{R}_{n,u}(\cdot,\cdot)$ for the distribution of the cluster size and the
ordinal patterns within a cluster. More precisely, we use the extremal functions approach \citep{dombry-etal-16} to simulate a Brown-Resnick time series of length $1\,000\,000$ with unit Fr\'echet margins and associated to the semi-variogram $\gamma(h) = 0.1 \cdot |h|^{1.75}$. We then estimate
\begin{itemize}
	\item the distribution of the cluster size,
	\item the distribution of ordinal patterns within clusters of size $2$
	\item and the distribution of ordinal patterns within clusters of size $3$
\end{itemize}
based on exceedances over the $95\%$-quantile of the unit Fr\'echet distribution using the ratio estimators according to Equation \eqref{eq:ratio-est}.
The results are displayed and compared to the exact limit distributions, calculated via simulations from the tail process, in Figures
\ref{fig:clustersize} and \ref{fig:ordinalpatterns}, respectively.
The uncertainty of the estimators is assessed via the multiplier block bootstrap \citep{drees-15,DDSW18} based on fixed blocks of size $1\,000$.
The $95\,\%$ confidence intervals obtained from the bootstrap are compared to the theoretical confidence intervals according to the asymptotic distribution given in Corollary \ref{coro:asymptotic-normality}.
It can be seen that all the probabilities are estimated rather accurately
and that the estimated uncertainty is close to the theoretical one, i.e.\ both types of confidence intervals have similar sizes.
\medskip

\section{Application: River Discharge at Cologne} \label{sec:app}

\begin{figure}[h]
	\includegraphics[width=0.99\textwidth]{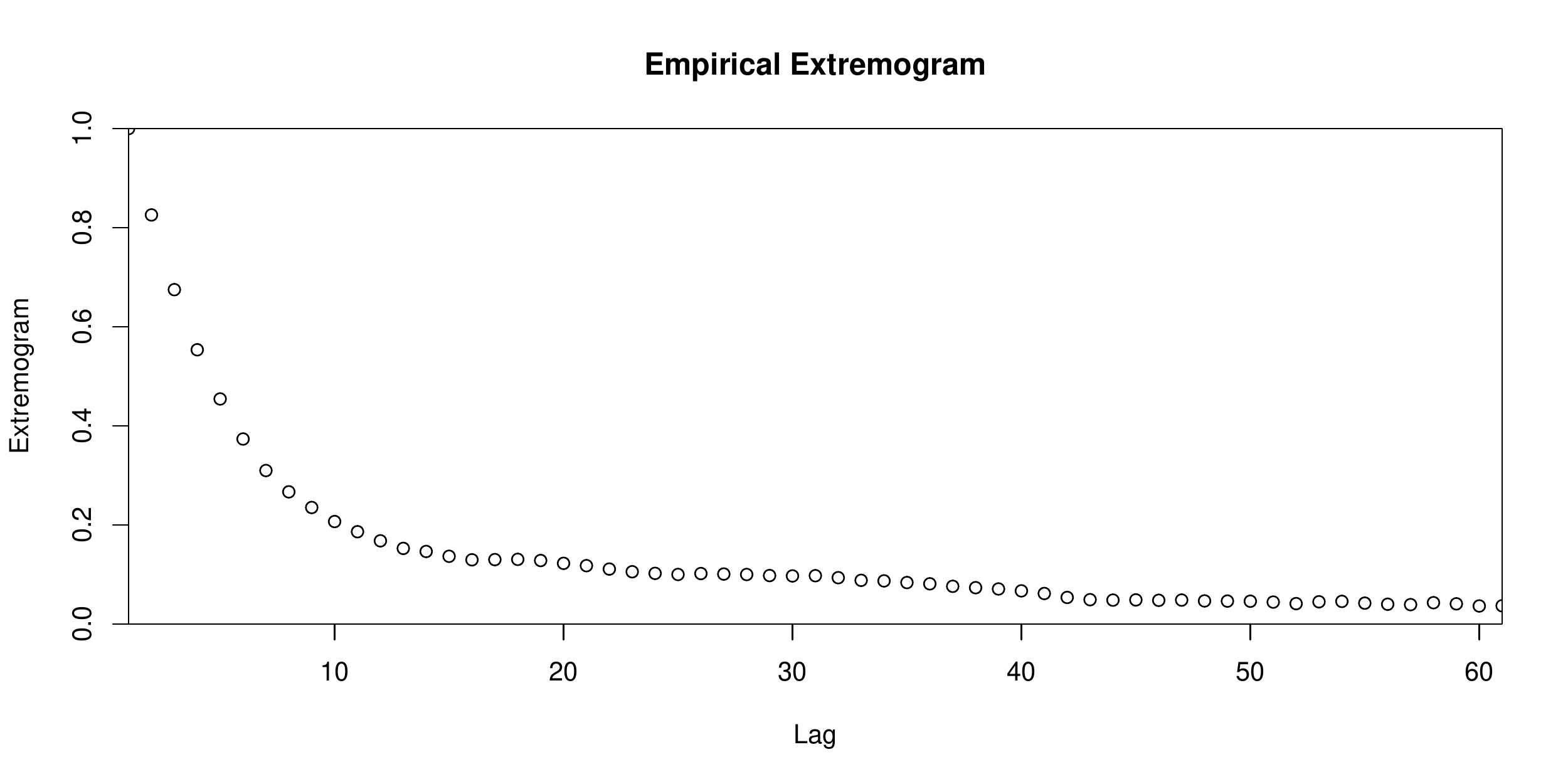}
	\caption{Empirical extremogram of the daily river discharge at Cologne based on the empirical $95\,\%$--quantile.}	
	\label{fig:extremogram_runoff}
\end{figure}

\begin{figure}[h] 
	\includegraphics[width=0.99\textwidth]{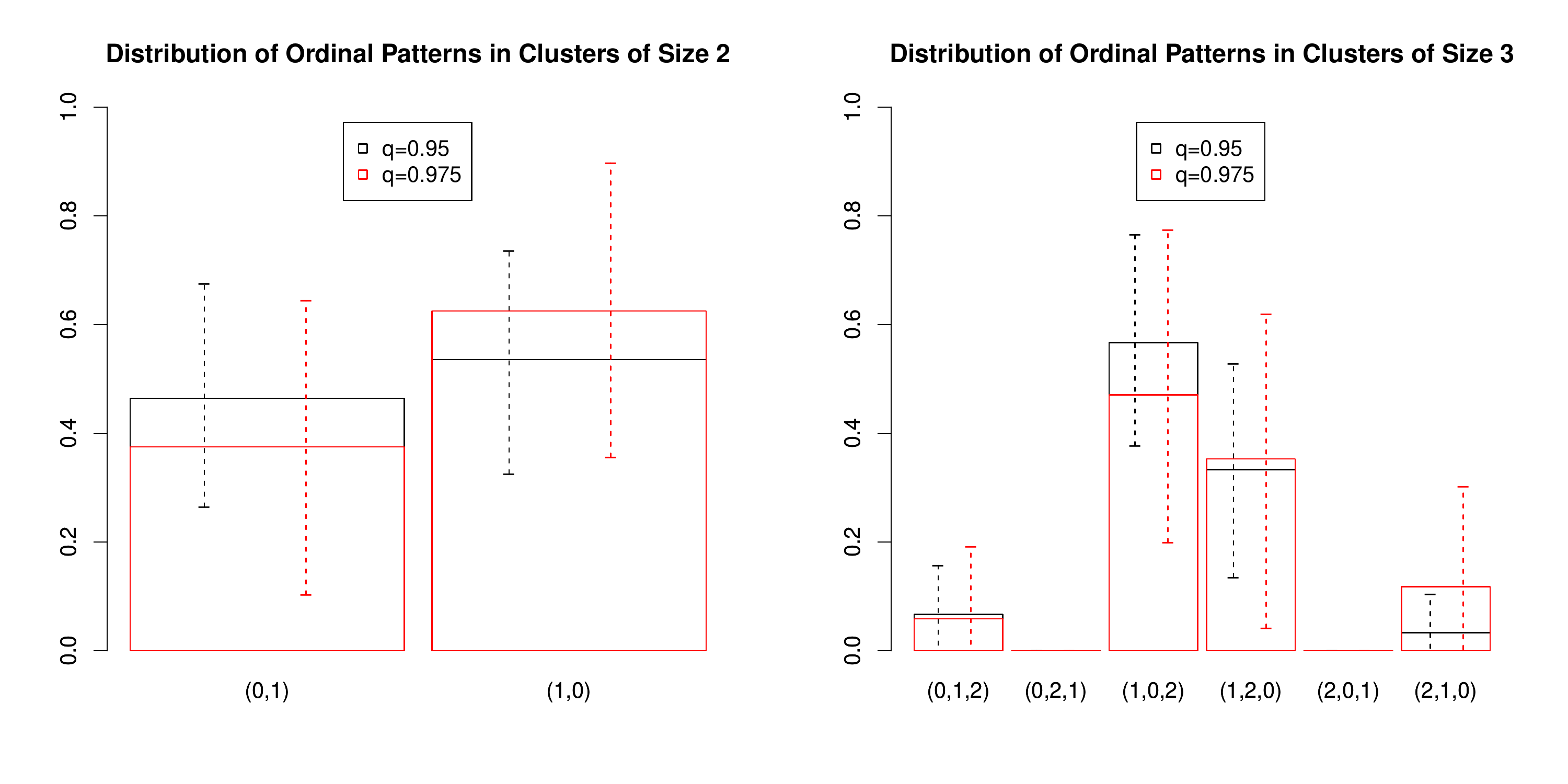}
	\caption{Bar charts of the distributions of ordinal patterns for the first $2$ (left) and $3$ (right) values of exceedance clusters of the daily river discharge at Cologne for three different thresholds (the empirical
		$95\,\%$-, $96\,\%$- and $97\,\%$-quantiles, from top to bottom).}
	\label{fig:ordinalpatterns-runoff}
\end{figure}

As an application we consider a time series of daily discharge data of the 
river Rhine measured at Cologne. In many cases, river discharge data exhibit 
temporal clustering of extremes, which entails the use of declustering 
techniques for the statistical analysis of their tail behaviour \citep[cf.][for
instance]{kallache-etal-11,asadi-etal-15}. Here, we study the structure of 
these clusters making use of the estimators introduced above. We restrict 
ourselves to the analysis of floods in the extended winter season (DJFM), assuming stationarity of the time series within each winter period consisting of 121 days
(and 122 days, respectively, in leak years). 

The given data set, provided by The Global Runoff Data Centre, 56068 Koblenz, Germany, consists of data from 197 winter seasons from December 1816 to March 2013. In an exploratory analysis, we calculate the empirical
version of the extremogram 
$$\rho_{(1,\infty),(1,\infty)}(h) 
 = \lim_{u \to \infty} \PP(X_h > u \mid X_0 > u), \quad h \in \ZZ,$$
based on exceedances the empirical $95\,\%$-quantile according to \citet{davis-mikosch-09}. The result, displayed in Figure \ref{fig:extremogram_runoff}, shows a decrease of extremal dependence as the temporal lag increases
being close to asymptotic independence for lags larger than $40$ days.  
Further analyses indicate that runoff data from different seasons may be assumed to be independent. These observations offer the applicability of the ratio estimator and the results on its asymptotic behaviour from Section \ref{sec:asymptotics}.

We choose two different thresholds for the empirical verification of the
stability of different exceedance cluster characteristics. More precisely,
we consider the empirical $95\,\%$- and $97.5\,\%$-quantiles
as thresholds leading to $200$ and $114$ clusters, respectively. 
As the empirical distributions of cluster sizes are rather difficult to compare due to the large number of potential outcomes relatively to the small number of clusters, we focus on the distribution of the $2$- and $3$-ordinal patterns. The results are displayed in Figure \ref{fig:ordinalpatterns-runoff} supplemented by $95\,\%$ confidence intervals obtained via a multiplier block bootstrap using each season as a fixed block. Even though the number of clusters is quite small, some interesting observations can be made: While both potential patterns of length two occur with almost the same frequency (in particular in case of the $95\,\%$-quantile), for clusters of size $3$, the patterns for which the second observation is the largest, i.e.\ $(1,0,2)$
and $(1,2,0)$, are clearly predominant. This means that extreme events that occur three time instants tend to show an ``up-down'' pattern. In contrast, patters with a ``down-up'' behaviour, i.e $(0,2,1)$ and $(2,0,1)$ do not occur at all.

In order to obtain more stable results based on a larger number of clusters, one could focus on patterns at the beginning of potentially longer clusters, i.e.\ one could consider the pattern for the first two instants within all clusters that are at least of size $2$, for instance. Maybe one can even use these ordinal patterns at the beginning of clusters to predict the length of the clusters. Such an analysis, however, is beyond the scope of the present article.

\section*{Acknowledgements}

Financial support by the DFG (German Research Foundation) for the project ``Ordinal-Pattern-Dependence: Grenzwerts\"atze und Strukturbr\"uche im langzeit- abh\"angigen Fall mit Anwendungen in Hydrologie, Medizin und Finanzmathematik'' (SCHN 1231/3-1) is gratefully acknowledged. In addition,
we would like to thank Dr.~Svenja Fischer (University of Bochum) for 
communicating the river discharge data set to us.
We are also grateful to the editor, an associate and two anonymous referees for valuable comments that helped to significantly improve the 
paper.

\section*{Appendix: Proofs}

\begin{proof}[Proof of Proposition \ref{prop:consistency}]
	We first note that, by Equation \eqref{eq:alpha-decay-consistency}, there exists
	some $C>0$ such that $\alpha_h \leq C n^{-\delta}$ for all $n \in \NN$.	
	By a straightforward computation, for $A = A_0$ and $A=A_1$ and any intermediate sequence $(r_n)_{n \in \NN}$ with $r_n \to \infty$ and 
	$r_n/n \to 0$, we then obtain 
	\begin{align*}
	& \Var\left(\frac{\widehat{P}_{n,u_n}(A)}{\PP(X_0>u_n)}\right) \\
	={}& \frac{1}{n^2 \PP(X_0>u_n)^2} \sum_{k=1}^{n-t} \sum_{l=1}^{n-t}
	\Cov\left(\mathbf{1}_{\{(X_i)_{i=k-1}^{k+t} \in u_nA\}}, 
	\mathbf{1}_{\{(X_i)_{i=l-1}^{l+t} \in u_nA\}}\right) \displaybreak[0]  \\
	\leq{}& \frac{2}{n \PP(X_0>u_n)^2} \sum_{h=0}^{n-t-1}
	\Big| \PP((X_i)_{i=-1}^{t} \in u_n A, (X_i)_{i=h-1}^{h+t} \in u_n A) \\
	& \hspace{4cm} -  \PP((X_i)_{i=-1}^{t} \in u_n A) \PP((X_i)_{i=h-1}^{t} \in u_n A)  \Big|\displaybreak[0]  \\
	\leq{}& \frac{2}{n \PP(X_0 > u_n)} \sum_{h=0}^{r_n} \Big| \PP((X_i)_{i=-1}^{t} \in u_n A, (X_i)_{i=h-1}^{h+t} \in u_n A \mid X_0 > u_n) \\
	& \hspace{3cm} -  \PP((X_i)_{i=-1}^{t} \in u_n A \mid X_0 > u_n)   
	\PP((X_i)_{i=h-1}^{t} \in u_n A)  \Big| \\   
	& + \frac{2}{n \PP(X_0 > u_n)^2} \sum_{h=r_n+1}^{\infty} \alpha_{h-t-2} \displaybreak[0]  \\
	\leq{}& \frac{2 (r_n +1)}{n \PP(X_0 > u_n)} + \frac{2 C }{ n \PP(X_0 > u_n)^2} \sum_{h=r_n-t-1}^\infty h^{-\delta}\\
	\sim{}& \frac{2 (r_n +1)}{n \PP(X_0 > u_n)} + \frac{2 C (r_n - t - 1)^{1-\delta}}{(\delta-1) n \PP(X_0 > u_n)^2}.     
	\end{align*}	
	Setting $r_n = \PP(X_0 > u_n)^{-1/\delta}$, the right-hand side is asymptotically equal to
	$$ \frac{2 (1 + C/(\delta-1)) + o(1)}{n \PP(X_0 > u_n)^{1+1/\delta}}
	= \frac{2 (1 + C/(\delta-1)) + o(1)}{[n^{\delta/(1+\delta)} \PP(X_0 > u_n)]^{1+1/\delta}} \to 0. $$
	Thus, by Chebychev's inequality, this implies that
	$$ \frac{\widehat{P}_{u_n,n}(A) - \EE(\widehat{P}_{u_n,n}(A))}{\PP(X_0 > u_n)} \to_p 0.$$
	Since, by regular variation, 
	$$\frac{\EE(\widehat{P}_{u_n,n}(A))}{\PP(X_0 > u_n)} = \PP( (X_i)_{i=-1}^t \in u_n A \mid X_0 > u_n) \to \mu_{\{-1,\ldots,t\}}(A),
	\qquad n \to \infty,$$
	we obtain that
	$ \widehat{P}_{u_n,n}(A)/\PP(X_0 > u_n) \to_p \mu_{\{-1,\ldots,t\}}(A)$ both for $A=A_0$ and $A=A_1$.
	An application of the continuous mapping theorem for convergence in probability completes the proof. 
\end{proof}

\begin{proof}[Proof of Theorem \ref{thm:asymptotic-normality}]
	We prove the equivalent statement that all linear combinations of the random vector converge 
	in distribution to a centered normal distribution with the corresponding variance. 
	To this end, let $a_0, \ldots, a_N \in \RR$ and define
	$$ Z_{n,k} = \frac 1 {\sqrt{n \PP(X_0>u_n)}} \sum_{j=0}^N a_j \left( \mathbf{1}_{\{(X_i)_{i=k-1}^{k+t} \in u_n A_j\}} - \PP((X_i)_{i=-1}^t \in u_n A_j)  \right).$$
	We note that, for each $n \in \NN$, the random variable $\sum_{k=1}^l 
	Z_{n,k}$ is centered and that its variance converges
	\begin{align*}
	& \Var\left(\sum\nolimits_{k=1}^n Z_{n,k}\right) \\
	={}&  \sum_{j=0}^N \sum_{l=0}^N a_j a_l \sum_{h=-(n-1)}^{n-1} \frac{n-|h|}{n} \cdot \frac{\Cov(\mathbf{1}_{\{(X_i)_{i=-1}^t \in u_n A_j\}},\mathbf{1}_{\{(X_i)_{i=h-1}^{h+t} \in u_n A_l\}})}{\PP(X_0 > u_n)}\\
	\to{}& \sum\nolimits_{j=0}^N \sum\nolimits_{l=0}^N a_j a_l \sigma_{jl} 
	\end{align*}	
	to the desired quantity, which can be shown analogously to the proof of Lemma \ref{lem:variance} (i.e.\ the proof of Thm.~3.1 in
	\citet{davis-mikosch-09}).\\ 
	It remains to show that the asymptotic distribution is normal. To this end, we verify that the triangular scheme $\{Z_{n,k}\}_{k=1,\ldots,n}$, $n \in \NN$, satisfies
	the conditions of Thm.~4.4 in \citet{rio-17}: 
	\begin{itemize}
	\item At first, all the variables $Z_{n,k}$ are required to be centered and have finite variance which holds true as they are bounded.
	\item Secondly, we need to verify that
	  \begin{align} \label{eq:var-uniform}
		\limsup_{n \to \infty} \max_{l=1,\ldots,n} \Var\left(\sum\nolimits_{k=1}^l Z_{n,k}\right)  < \infty,
	  \end{align}
	  which again can be shown analogously to the proof of Lemma \ref{lem:variance}.
    \item The third and last condition to be verified is
	\begin{equation} \label{eq:rio}
	\lim_{x \to \infty} n \int_0^1 \alpha^{-1}(x) Q_{n,a}^2(x) \min\{ \alpha^{-1}(x) Q_{n,a}(x), 1\} \, \mathrm{d} x = 0
	\end{equation}
	where $\alpha^{-1}$ and $Q_{n,a}$ denote the inverse functions of $h \mapsto \alpha_h$
	and $$u \mapsto \PP( |Z_{n,k}| > u),$$ respectively.
	Noting that $Q_{n,a}$ can be bounded via the relation
	$$ Q_{n,a}(x) \leq \sum\nolimits_{j=1}^N |a_j| Q_{n,e_j}\left(\frac x N\right), \quad x \in [0,1], $$
	where $e_j$ is the $j$th standard basis vector in $\RR^N$, $j=1,\ldots,N$,
	it suffices to verify
	\begin{equation*} 
	\lim_{x \to \infty} n \int_0^1 \alpha^{-1}(x) Q_{n,e_j}^2\left(\frac x N\right) \min\left\{ \alpha^{-1}(x) Q_{n,e_j}\left(\frac x N\right), 1\right\} \, \mathrm{d} x = 0
	\end{equation*}
	for $j=1,\ldots,N$. By Equation \eqref{eq:alpha-decay}, we have $\alpha^{-1}(x) \leq (x/C)^{-1/\delta}$ for some $C>0$ and
	\begin{align*}
	Q_{n,e_j}\left(\frac x N\right) = \begin{cases} 
	\frac{1 - \PP( (X_i)_{i=-1}^t \in u_n A_j)}{\sqrt{n \PP(X_0 > u_n)}}, 
	& 0 \leq x < N \PP( (X_i)_{i=-1}^t \in u_n A_j), \\
	\frac{\PP( (X_i)_{i=-1}^t \in u_n A_j)}{\sqrt{n \PP(X_0 > u_n)}}, 
	& N \PP( (X_i)_{i=-1}^t \in u_n A_j) \leq x \leq 1,
	\end{cases}                
	\end{align*}
	for sufficiently large $n$. Thus, we have
	\begin{align*}
	& n \int_0^1 \alpha^{-1}(x) Q_{n,e_j}^2\left(\frac x N\right) \min\left\{ \alpha^{-1}(x) Q_{n,e_j}\left(\frac x N\right), 1\right\} \, \mathrm{d} x
	\displaybreak[0] \\
	\leq{}&    n \int_0^{N \PP((X_i)_{i=-1}^t \in u_n A_j)} (x/C)^{-1/\delta} (n \PP(X_0>u_n))^{-1}  \\
	& \hspace{3cm} \min\{1,(x/C)^{-1/\delta} (n \PP(X_0>u_n)^{-1/2}\} \, \mathrm{d} x  	\displaybreak[0] \\
	&  + n \int_{N \PP((X_i)_{i=-1}^t \in u_n A_j)}^1 (x/C)^{-2/\delta} \left(\frac{\PP((X_i)_{i=-1}^t \in u_n A_j)}{\sqrt{n \PP(X_0 > u_n)}}\right)^3  \, \mathrm{d} x \displaybreak[0] \\
	=:{}& I_1 + I_2.
	\end{align*}
	For the assessment of the integral term $I_2$, we employ the upper bound
	$$ \left(\frac x C\right)^{-2/\delta} \leq (C/N)^{2/\delta} \PP((X_i)_{i=-1}^t \in u_n A_j)^{-2/\delta} $$
	and obtain
	\begin{align*}
	I_2 \leq{}& n \int_0^1  \left(\frac{C}{N}\right)^{\frac{2}{\delta}}
	\PP((X_i)_{i=-1}^t \in u_n A_j)^{-\frac{2}{\delta}}  \left(\frac{\PP((X_i)_{i=-1}^t \in u_n A_j)}{\sqrt{n \PP(X_0 > u_n)}}\right)^3  \, \mathrm{d} x \\    
	\leq{}&  \left(\frac{C}{N}\right)^{\frac{2}{\delta}} \frac 1 {\sqrt{n}} \PP((X_i)_{i=-1}^t \in u_n A_j)^{\frac{3\delta-4}{2\delta}}
	\left( \frac{\PP((X_i)_{i=-1}^t \in u_n A_j)}{\PP(X_0 > u_n)}\right)^{3/2} \longrightarrow 0
	\end{align*}
	using that $\PP((X_i)_{i=-1}^t \in u_n A_j) / \PP(X_0 > u_n) \to \mu_{\{-1,\ldots,t\}}(A_j) < \infty$ 
	and $\PP((X_i)_{i=-1}^t \in u_n A_j) \to 0$ as $n \to \infty$.
	
	For the assessment of the integral $I_1$, we distinguish between the two cases $\delta=2$ and $\delta>2$.\\
	In the case $\delta=2$, we have
	\begin{align*}
	I_1 \leq{}& n \int_0^{N [ n \PP((X_i)_{i=-1}^t \in u_n A_j) ]^{-1}} C^{1/2}  x^{-1/2} (n \PP(X_0 > u_n))^{-1} \, \mathrm{d} x \\
	& + n \int_{N [ n \PP((X_i)_{i=-1}^t \in u_n A_j) ]^{- 1}}^{N \PP((X_i)_{i=-1}^t \in u_n A_j)} C  x^{-1} \sqrt{n \PP(X_0 > u_n)}^{-3} \, \mathrm{d}x \\        
	={}&    \sqrt{2CN} [ n \PP((X_i)_{i=-1}^t \in u_n A_j) ]^{-1/2} \PP(X_0 > u_n)^{-1} \\ 
	&  + C n^{-1/2} \PP(X_0 > u_n)^{-3/2} \log(\PP((X_i)_{i=-1}^t \in u_n A_j)) \\
	&  + C n^{-1/2} \PP(X_0 > u_n)^{-3/2} \log( n \PP((X_i)_{i=-1}^t \in u_n A_j) ) \\
	={}&    \sqrt{2CN}  n^{-1/2} \PP(X_0 >u_n)^{-3/2} \cdot \sqrt{\frac{\PP(X_0 > u_n)}{\PP((X_i)_{i=-1}^t \in u_n A_j)}} \\
	&  + C n^{-1/2} \PP(X_0 > u_n)^{-3/2} \cdot  \bigg\{ - \log(\PP(X_0 > u_n)) + \log( n \PP(X_0 > u_n)^3 ) \\ 
	&  \hspace{5cm} + 2 \log\left(\frac{\PP((X_i)_{i=-1}^t \in u_n A_j)}{\PP(X_0 > u_n)}\right) \bigg\} 
	\end{align*}
	which vanishes as $n \to \infty$ because of \eqref{eq:add-cond-delta0} and
	$\lim_{x \to \infty} x^{-1/2} \log(x) = 0$.\\
	For $\delta > 2$, we obtain 
	\begin{align*}
	I_1 \leq{}& n \int_{0}^{N \PP((X_i)_{i=-1}^t \in u_n A_j)}
	C^{2/\delta}  x^{-2/\delta} \sqrt{n \PP(X_0 > u_n)}^{-3} \, \mathrm{d}x \displaybreak[0]\\        
	={}& \frac{\delta}{\delta-2} N^{1-\frac{2}{\delta}} C^{\frac{2}{\delta}} \frac{1}{\sqrt{n}} \PP((X_i)_{i=-1}^t \in u_n A_j)^{1 - \frac{2}{\delta}} \PP(X_0 > u_n)^{-3/2} \displaybreak[0]\\
	={}& \frac{\delta}{\delta-2} N^{1-\frac{2}{\delta}} C^{\frac{2}{\delta}} \frac{1}{\sqrt{n}} \PP(X_0 > u_n)^{-\frac 12 \left(\frac{4+\delta}{\delta}\right)} \left(\frac{\PP((X_i)_{i=-1}^t \in u_n A_j)}{\PP(X_0 > u_n)}\right)^{1 - \frac{2}{\delta}}
	\end{align*}
	which vanishes as $n \to \infty$ because of \eqref{eq:add-cond}.
	\end{itemize} 
\end{proof}

	\begin{proof}[Proof of Proposition \ref{prop:maxstable}]
	We verify the validity of Equation \eqref{eq:mixing} and
	Equation \eqref{eq:anticlustering}. To this end, we use the results of \citet{dombry-eyiminko-12} to give an upper bound for the mixing 
	coefficients $\alpha_h$ in \eqref{eq:def-alpha} in terms of extremal
	coefficients \citep[see also][]{davis-mikosch-zhao-13}:
	\begin{equation} \label{eq:alphabound}
	\alpha_h \leq 2 \sum\nolimits_{s_1 = -\infty}^0 \sum\nolimits_{s_2 =0}^\infty \left(2 - \theta(s_2-s_1+h)\right) = 2 \sum\nolimits_{s=0}^\infty (s+1) \left(2 - \theta(s+h)\right).
	\end{equation}
	Making use of the monotonicity of the function $h \mapsto \theta(h)$ on $\NN_0$, the series considered in Equation \eqref{eq:mixing} can thus be bounded by
	\begin{align*}
	\frac{1}{\PP(X_0 > u_n)} \sum\nolimits_{h=r_n}^\infty \alpha_h \leq  \frac{2}{\PP(X_0 > u_n)} \sum\nolimits_{h=0}^\infty (h+1)(h+2) \left(2 - \theta(h+r_n)\right).
	\end{align*}
	The asymptotic relation $\PP(X_0 > u_n) = 1 - \exp(-u_n^{-\alpha}) \sim u_n^{-\alpha}$ then yields that \eqref{eq:mixing-maxstable} is a sufficient condition for \eqref{eq:mixing}.
	
	In order to simplify condition \eqref{eq:anticlustering}, we first note that 
	$$ \PP(X_h > u_n \mid X_0 > u_n) = \frac{2(1-\exp(-u_n^{-\alpha})) - (1-\exp(-\theta(h)u_n^{-\alpha}))}{1 - \exp(-u_n^{-\alpha})}. $$
	Using the inequality $x - x^2/2 \leq 1 - \exp(-x) \leq x$ for all $x \geq0$, 
	we obtain the bounds
	\begin{align} \label{eq:bounds-extremogram}
	& 2 - \theta(h) - \frac{u_n^{-\alpha}}2 \leq{}  \PP(X_h > u_n \mid X_0 > u_n) \nonumber \\
	\leq{}& \frac{2 - \theta(h) + \frac{\theta(h)^2 u_n^{-\alpha}}{2}}{1 - \frac {u_n^{-\alpha}} 2 } 
	\sim  2 - \theta(h) +  \frac{\theta(h)^2}{2} u_n^{-\alpha} \leq 2 - \theta(h) + 2 u_n^{-\alpha}.
	\end{align}
	As the sequences $r_n, u_n \to \infty$ are chosen such that $r_n \PP(X_0 > u_n) \to 0$ as $n \to \infty$,
	we obtain that, for every $k \in \NN$,
	$$ \sum\nolimits_{h=k}^{r_n}  u_n^{-\alpha} \leq \frac{r_n}{u_n^\alpha} = \frac{r_n \PP(X_0 > u_n)}{u_n^\alpha \PP(X_0 > u_n)} \to 0 \quad (n \to \infty),$$
	and consequently, from the inequalities above, \eqref{eq:anticlustering}
	holds if and only if \eqref{eq:anticlustering-maxstable} holds.
	
\end{proof}

\begin{proof}[Proof of Corollary \ref{coro:br}]
	We first note that, for the Brown--Resnick model,
	$$\theta(h) = 2 \Phi\left(\sqrt{\gamma(h)/2}\right), \qquad h \in \ZZ,$$
	cf.~\citet{kabluchko-etal-09}. In particular, using $1 - \Phi(x) \sim x^{-1} \varphi(x)$ as $x \to \infty$, we 
	obtain for large $h$ that
	$$ 2 - \theta(h) \sim  \frac{2}{\sqrt{\pi \gamma(h)}} \exp\left(-\frac{\gamma(h)}{4}\right). $$ 
	Thus, similarly to \citet{davis-mikosch-zhao-13}, we can employ Equation \eqref{eq:alphabound} to bound the $\alpha$-mixing coefficients by
	\begin{align*}
	\alpha_h \leq{}& \frac{4}{\sqrt{\pi C}} \sum\nolimits_{s=0}^\infty \frac{s+1}{(s+h)^{\varepsilon/2}}
	\exp\left(-\frac 1 4 C \cdot (s+h)^\varepsilon\right)\\
	\leq{}& \frac{4}{\sqrt{\pi C}} \sum\nolimits_{s=0}^\infty \frac{s+1}{s^{\varepsilon/2}}
	\exp\left(-\frac 1 8 C s^\varepsilon - \frac 1 8 C h^\varepsilon\right) 
	\leq C_\alpha \exp\left(- \frac 1 8 C h^\varepsilon\right)
	\end{align*}
	for some appropriate constant $C_\alpha > 0$, that is, the $\alpha$-mixing
	coefficients $\alpha_h$ decay at an exponential rate. 
	As Equation \eqref{eq:alpha-decay} holds for every $\delta > 0$,  Equation \eqref{eq:add-cond} simplifies to the condition $\lim_{n \to \infty} n^{1+\varepsilon^*} \PP(X_0 > u_n) 
	= \lim_{n \to \infty} n^{1-\varepsilon^*}/u_n = \infty$ for some 
	$\varepsilon^* > 0$, which is true for $u_n \sim n^{\beta_1}$.
	Thus, the assumptions of Theory \ref{thm:asymptotic-normality} reduce to Condition (M). 
	\medskip
	
	Choosing $u_n \sim n^{\beta_1}$ and $r_n \sim n^{\beta_2}$
	\citep[see also][]{buhl-klueppelberg-18,buhl-etal-19}, we have $u_n \to \infty$, $r_n \to \infty$, 
	$n \PP(X_0 > u_n) \sim n^{1 - \beta_1} \to \infty$ and 
	$r_n \PP(X_0 > u_n) \sim n^{\beta_2 - \beta_1} \to 0$ as $n \to \infty$.
	Furthermore, similarly to the assessment above,
	\begin{align*}
	& u_n \sum\nolimits_{h=1}^\infty h^2 \left(2 - \theta(h+r_n)\right) \\
	\leq{}& u_n \frac 4 {\sqrt{\pi C}} \exp\left(- \frac 1 8 C r_n^\varepsilon \right)
	\cdot \sum\nolimits_{h=1}^\infty h^{2-\varepsilon/2} \exp\left(- \frac 1 8 C h^\varepsilon \right)
	\stackrel{n \to \infty}{\longrightarrow} 0,
	\end{align*}
	i.e.\ Equation \eqref{eq:mixing-maxstable} holds. We also obtain
	$$ \sum_{h=k}^{r_n} \left( 2 - \theta(h)\right) 
	\leq \exp\left(- \frac 1 8 C (k-1)^\varepsilon\right) 
	\sum_{h=1}^\infty \frac{4}{\sqrt{\pi C h^\varepsilon}} \exp\left(- \frac 1 8 C h^\varepsilon\right) 
	\stackrel{k \to \infty}{\longrightarrow} 0 $$
	which implies \eqref{eq:anticlustering-maxstable}.
	Consequently, the assertion of the corollary follows from
	Proposition \ref{prop:maxstable}. 	
\end{proof}

\end{document}